\documentclass[12pt]{amsart}

\usepackage{amsthm,amsmath,amsfonts,amssymb}
\usepackage{color}
\usepackage{enumerate}
\usepackage{enumitem}
\usepackage{algorithmic}
\usepackage[T1]{fontenc}
\usepackage{wrapfig}
\usepackage{todonotes}

\usepackage{stmaryrd} 
\usepackage{mathtools} 
\usepackage[normalem]{ulem} 

\usepackage[
            pdfauthor   = {Markus Lange-Hegermann},
            pdftitle    = {},
            pdfsubject  = {},
            pdfkeywords = {},
            bookmarks=true,
            bookmarksopen=true,
            colorlinks=true,
            hyperindex=true,
            linkcolor=blue,
            citecolor=blue,
            urlcolor=blue,
            ]{hyperref}

\newcommand{\mcI}{{\mathcal{I}}}

\newcommand{\Z}{{\mathbb{Z}}}
\newcommand{\Q}{{\mathbb{Q}}}

\newcommand{\C}{{\mathbb{C}}}
\DeclareMathOperator{\ld}{ld}
\DeclareMathOperator{\ini}{ini}
\DeclareMathOperator{\ord}{ord}
\newcommand{\sol}{\ensuremath{\mathfrak{Sol}}}
\newcommand{\forget}{\ensuremath{\rho}}

\newcommand{\ce}{\zeta}
\newcommand{\ma}{\ensuremath{\mathbf{a}}}
\newcommand{\cd}{\bar{c}}

\newtheorem{theorem}{Theorem}[section]
\newtheorem{lemma}[theorem]{Lemma}
\newtheorem{proposition}[theorem]{Proposition}

\theoremstyle{definition}
\newtheorem{definition}[theorem]{Definition}
\newtheorem{example}[theorem]{Example}
\newtheorem{remark}[theorem]{Remark}

\author{Markus Lange-Hegermann}
\address{Lehrstuhl B f\"ur Mathematik, Rheinisch-Westf\"alische Technische Hochschule Aachen, 52062 Germany}
\email{\href{mailto:Markus Lange-Hegermann <markus.lange.hegermann@rwth-aachen.de>}{markus.lange.hegermann@rwth-aachen.de}}

\begin{document}

\title[Differential Counting Polynomial]{The Differential Counting Polynomial}

\begin{abstract}
  The aim of this paper is a quantitative analysis of the solution set of a system of polynomial nonlinear differential equations, both in the ordinary and partial case.
  Therefore, we introduce the differential counting polynomial, a common generalization of the dimension polynomial and the (algebraic) counting polynomial.
  Under mild additional assumptions, the differential counting polynomial decides whether a given set of solutions of a system of differential equations is the complete set of solutions.
\end{abstract}

\keywords{dimension polynomial, differential counting polynomial}
\subjclass[2010]{
  12H05, 
  35A01, 
  35A10, 
  34G20, 
}

\maketitle


\section{Introduction}

Many systems of differential equations do not admit closed form solutions in ``elementary'' functions and hence cannot be solved symbolically.
Despite this, increasingly good heuristics are implemented in computer algebra systems to find solutions \cite{ChebTerrab1995102,ChebTerrabRoche2008}.
Given such a set of closed form solutions returned by a computer algebra system, the question remains whether this set is the complete solution set.
More generally, the goal of this paper is to ``measure'' the sizes of solution sets $U\subseteq V$ in order to decide whether $U=V.$

There are many classical measures of the size of the solution set (cf.\ \cite{Seiler} for an overview), the strongest\footnote{It implies other descriptions like Cartan characters or Einstein's strength.} of which is Kolchin's dimension polynomial \cite{KolPolynomial,DiffDimPoly}.
However, the dimension polynomial can only describe solution sets given by characterizable differential ideals and only the dimension of such solution sets, but no finer details (cf.\ Example~\ref{example_better_dim_poly}).
It was a great surprise to the author that such finer details can appear in the solution set of a differential equation, for example countable infinite exceptional sets (cf.\ Example~\ref{example_hard}).

This paper introduces the differential counting polynomial, a more detailed description of a solution set of a system of differential equations.
If it exists, it generalizes the dimension polynomial (Theorem~\ref{theorem_counting_and_dimension}) and decides in many cases whether solution sets are equal (Theorem~\ref{theorem_differential_counting_subsets} and Proposition~\ref{proposition_counting_poly_estimate}).

The idea of the differential counting polynomial originates from Ples\-ken's algebraic counting polynomial \cite{PleskenCounting}.
The algebraic counting polynomial is an element $c(V)\in\Z[\infty]$ which describes the size of a constructible set $V$ in affine $n$-space.
Here, $\infty$ is a free indeterminate, which can be thought of as representing the cardinality of an affine $1$-space.
For example, the algebraic counting polynomial of an affine $i$-space is $\infty^i\in\Z[\infty]$, and if $V$ is a $j$-fold unramified cover of $W$, then $c(V)=j\cdot c(W)\in\Z[\infty]$.
Two constructible sets $U\subseteq V$ are equal if and only if their algebraic counting polynomials $c(U)$ and $c(V)$ coincide.

The differential counting polynomial is the algebraic counting polynomial for the different Taylor polynomials of degree $\ell$ of all formal power series solutions.
We restrict to formal power series solutions as they exist in a formally consistent system of differential equations for any formal power series given as initial data.
Similar results hold for analytic \cite{Riquier} but not for smooth initial data (cf.\ Lewy's example \cite{LewyConterexample}).

Determining the differential counting polynomial of a set of differential equations is not algorithmic in general.
Even in the case of a single inhomogeneous linear differential equation the problem of the existence of formal power series solutions can be reduced to Hilbert's unsolvable tenth problem about Diophantine equations \cite{DenefLipshitz}.
Also, the existence of the differential counting polynomial is still an open problem.
However, the author succeeded in computing the differential counting polynomial using Theorem~\ref{theorem_without_inequations} and inductive proofs similar to Examples~\ref{example_hard} and \ref{example_better_dim_poly} for all of the various classes of examples of differential equations he encountered.

So, it is hard to determine the differential counting polynomial from a set of differential equations without \emph{explicitly} knowing the corresponding full set of solutions $V.$
In contrast, the differential counting polynomial $c(U)$ of an explicitly given set of solutions $U$ can be computed more easily by determining how unrestrictedly the power series coefficients of elements in $U$ are chooseable (cf.\ Example~\ref{example_better_dim_poly}).
Once the differential counting polynomial $c(V)$ of $V$ is known, one can often decide whether an explicitly given set of solutions $U$ is equal to the complete set of solutions $V$ by comparing $c(U)$ and $c(V)$.

Sections~\ref{sect_simple_systems} and \ref{sect_alg_counting} recapitulate simple systems and Plesken's algebraic counting polynomial, respectively, and generalize them for our needs.
In Section~\ref{section_differential_counting} we define the differential counting polynomial, state its basic properties, and give examples.
The author's PhD thesis \cite{lhphd} contains additional (classes of) examples.
The proofs follow in Section~\ref{sect_proofs}.

\section{Simple \texorpdfstring{$\sigma$}{sigma}-Systems}\label{sect_simple_systems}

Simple systems stratify constructible sets into sets with convenient geometric properties.
Such systems underlie Plesken's counting polynomial.
This section recapitulates simple systems and generalizes them to describe differential equations.

Let $R:=\C[y_1,\ldots,y_n]$ be a polynomial ring.
We fix the total order, called ranking, $y_1<y_2<\ldots<y_n$ on $\{y_1,\ldots,y_n\}$.
The $<$-greatest variable $\ld(p)$ occurring in $p\in R\setminus \C$ is called leader of $p$.
The coefficient $\ini(p)$ of the highest power of $\ld(p)$ in $p$ is called the initial of $p$.
For $S\subset R\setminus\C$ define $\ld(S):=\{\ld(p)|p\in S\}$ and similarly $\ini(S)$.
Denote by $S_{<y_i}$, $S_{\le y_i}$, and $S_{y_i}$ the sets $S\cap \C[y_1,\ldots,y_{i-1}]$, $S\cap \C[y_1,\ldots,y_i]$, and $\{p\in S\mid\ld(p)=y_i\}$, respectively, for all $1\le i\le n$.

We call a set of finitely many equations and countably many inequations a \textbf{$\sigma$-system}.
If this set is finite, we call it \textbf{system}.
Let $S$ be a $\sigma$-system over $R$.
We denote the set of solutions in $\C^n$ of $S$ by $\sol(S)$.
Call $S$ weakly triangular if it contains no equation or inequation in $\C$ and it contains either at most one equation or arbitrary many inequations of leader $y_i$ for each $1\le i\le n$.
We say that $S$ has non-vanishing initials if no initial vanishes when substituting an $\ma \in \sol(S)$.
Substituting all indeterminates $y_i\not=\ld(p)$ in $p\in S$ by an $a_i\in \C$ results in a univariate polynomial.
If all these univariate polynomials resulting from the $p\in S$ and $\ma \in \sol(S)$ are square-free, then we call $S$ square-free.
We call $S$ a \textbf{simple $\sigma$-system} if it is weakly triangular, has non-vanishing initials, and is square-free.

A set $\{S_1\ldots,S_l\}$ of simple $\sigma$-systems with disjoint solution sets is called an \textbf{algebraic Thomas decomposition} of a $\sigma$-system $S$ if $\sol(S)=\biguplus_{1\le i\le l}\sol(S_i)$.
Such a Thomas decomposition is called \textbf{comprehensive} with respect to an indeterminate $y_k$ if $\sol((S_i)_{\le y_k})\cap\sol((S_j)_{\le y_k})\in\{\emptyset,\sol(S_i)\}$ for all $1\le i,j\le l$.

\section{Algebraic Counting Polynomials}\label{sect_alg_counting}
\label{sect_algebraic_thomas_counting}

This section recapitulates Plesken's algebraic counting polynomial for constructible sets \cite{PleskenCounting,BaechlerPlesken}, and generalizes it to be suitable for describing differential equations.
We consider the affine $n$-space $\C^n$ with projections $\pi_i:\C^n\to\C^i:(a_1,\ldots,a_n)\mapsto(a_1,\ldots,a_i)$.

\begin{definition}\label{definition_counting}
  The following four axioms iteratively\footnote{This is independent of the order in which the axioms are applied \cite[Prop.~3.3]{PleskenCounting}.} applied to a constructible set $V\subseteq\C^n$ yield its \textbf{algebraic counting polynomial}, an element in the univariate polynomial ring $\Z[\infty]$.
  \begin{enumerate}
    \item $c(V)=|V|$ if $V$ is finite.\label{definition_counting_singleton}
    \item $c(V)=\infty$ for an affine $1$-space $V$ over $\C$.\label{definition_counting_1_dim}
    \item $c(V\uplus W)=c(V)+c(W)$ for disjoint constructible sets $V,W\subset\C^n$.\label{definition_counting_sum}
    \item If $V\subset\C^n$ is constructible and for some $1\le i\le n$ each non-empty fiber $W$ of $\pi_i$ has the same value under $c$, then $c(V)=c(W)\cdot c(\pi_i(V))$.\label{definition_counting_product}
  \end{enumerate}
\end{definition}

The algebraic Thomas decomposition makes the computation of the algebraic counting polynomial algorithmic  \cite{PleskenCounting}.
The following theorem shows how the algebraic counting polynomial can be used to compare constructible sets.
Our goal is a similar theorem for solution sets of differential equations.

\begin{theorem}[{\cite[Cor.~3.4]{PleskenCounting}}]\label{theorem_counting_polynomials}
  Let $U\subseteq V\subseteq\C^n$ be constructible sets.
  Then $U=V$ if and only if $c(U)=c(V)$.
\end{theorem}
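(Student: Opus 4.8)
The plan is to prove the two implications separately, essentially all of the work lying in the ``if'' direction. The ``only if'' direction is immediate: by the footnote to Definition~\ref{definition_counting}, the four axioms assign to each constructible set a well-defined element of $\Z[\infty]$, so $U=V$ forces $c(U)=c(V)$ with nothing to prove.

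For the converse I would argue by contraposition: assuming $U\subsetneq V$, I must show $c(U)\neq c(V)$. Since the constructible subsets of $\C^n$ form a Boolean algebra, $W:=V\setminus U$ is constructible and, by hypothesis, nonempty, and $V=U\uplus W$ is a disjoint union, so axiom~\ref{definition_counting_sum} gives $c(V)=c(U)+c(W)$. Everything therefore reduces to proving that $c(W)\neq 0$ for every nonempty constructible $W\subseteq\C^n$. I would in fact prove the sharper statement that $c(W)$ has degree exactly $\dim W$ and positive leading coefficient: the positivity is precisely what rules out cancellation in the inductive bookkeeping below.

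To prove this sharper claim I would first reduce to a single cell. Fix a comprehensive algebraic Thomas decomposition $W=\biguplus_{i=1}^{l}\sol(S_i)$ into simple systems (finite, since $W$ is constructible), as recalled in Section~\ref{sect_simple_systems}. Iterating axiom~\ref{definition_counting_sum} gives $c(W)=\sum_{i=1}^{l}c(\sol(S_i))$, and since $\dim W=\max_i\dim\sol(S_i)$, if each summand has degree $\dim\sol(S_i)$ with positive leading coefficient then the top-degree coefficient of $c(W)$ is a sum of positive integers; so it suffices to prove the claim for $W=\sol(S)$ with $S$ a single simple system on $\C^n$, which I would do by induction on $n$. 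For $n=0$, $\sol(S)$ is empty or a point and the claim is clear. For the inductive step one uses that, for a simple system, $\pi_{n-1}(\sol(S))=\sol(S_{<y_n})$, which is again the nonempty solution set of a simple system on $\C^{n-1}$. Weak triangularity leaves three cases for the constraints of leader $y_n$: (a) a single equation $p$, in which case the non-vanishing initial and square-freeness make every nonempty fiber of $\pi_{n-1}$ a set of exactly $e:=\deg_{y_n}(p)\ge 1$ points, so every such fiber has $c$-value $\gamma=e$, and $\dim\sol(S)=\dim\sol(S_{<y_n})$; (b) only inequations, so every fiber is $\C$ with a finite set of points deleted whose cardinality is---after refining to a comprehensive decomposition---a constant $k\ge 0$, so every such fiber has $c$-value $\gamma=\infty-k$; (c) no constraint of leader $y_n$, so every fiber is $\C$ with $c$-value $\gamma=\infty$; in cases (b) and (c), $\dim\sol(S)=\dim\sol(S_{<y_n})+1$. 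In every case all nonempty fibers share the same value $\gamma\in\Z[\infty]$ under $c$, so axiom~\ref{definition_counting_product} gives $c(\sol(S))=\gamma\cdot c(\sol(S_{<y_n}))$; since $\gamma$ has positive leading coefficient and degree $\dim\sol(S)-\dim\sol(S_{<y_n})$, multiplying by the inductive hypothesis for $c(\sol(S_{<y_n}))$ yields the claim for $\sol(S)$.

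The step I expect to be the main obstacle is establishing the uniformity that makes axiom~\ref{definition_counting_product} applicable in case (b): one must know that the number of points deleted from the generic line does not drop on some proper subvariety of the base, i.e.\ that no coincidences among the roots of the substituted inequations occur there. Ensuring this is exactly the role of comprehensiveness of the Thomas decomposition together with the square-freeness and non-vanishing-initials conditions built into simple systems; carrying out this verification at each projection step---that is, checking that the hypotheses of axiom~\ref{definition_counting_product} genuinely hold---is the technical core of the argument, everything else being routine manipulation of degrees and leading coefficients.
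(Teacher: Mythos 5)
Your argument is correct and is essentially the proof the paper relies on: the statement is imported from \cite[Cor.~3.4]{PleskenCounting}, whose proof is exactly your reduction to showing that a nonempty constructible $W=V\setminus U$ has $c(W)\neq 0$ because $\deg_\infty c(W)=\dim W$ with positive leading coefficient, established cell-by-cell via the fibration structure of simple systems. The same degree/leading-coefficient mechanism is what the paper itself reuses in the uniqueness part of Theorem~\ref{theorem_counting_well_defined} and in Proposition~\ref{proposition_estimate_counting_polynomial}, so your identification of the fiber-uniformity check as the technical core is accurate.
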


In solution sets of differential equations, countable exceptional sets appear naturally (cf.\ Example~\ref{example_hard}).
To describe these sets, we generalize the algebraic counting polynomial.

\begin{definition}
  Let $V\subseteq \C^n$.
  Then, call \emph{any} element $c(V)$ in the polynomial ring $\Z[\infty,\aleph_0]$ constructed by iteratively applying the four axioms from Definition~\ref{definition_counting} above and the following fifth axiom an \textbf{algebraic counting polynomial} of $V$.
  \begin{enumerate}
    \item[(5)] $c(\C^1\setminus M)=\infty-\aleph_0$ for $M\subset\C^1$ is countably infinite.\label{definition_counting_discrete}
  \end{enumerate}
\end{definition}

\begin{remark}\label{remark_counting_not_unique}
  In general, the algebraic counting polynomial is not unique.
  For example, the set $\sol(\{x-i\not=0|i\in\Z_{\ge0}\})=\sol(\{x-i\not=0|i\in\Z_{\ge1}\})\uplus\{0\}$ can have both algebraic counting polynomial $\infty-\aleph_0$ and $\infty-\aleph_0+1$.
  Hence, Theorem~\ref{theorem_counting_polynomials}, which states that the algebraic counting polynomial decides equality of contained \emph{constructible} sets, cannot hold in general, but it holds for the important special case of well-fibered sets (cf.\ Theorem~\ref{theorem_counting_subset}).
\end{remark}

Even worse, it is not clear in which cases an algebraic counting polynomial exists, i.e.\ that there exists a way to apply the axioms terminating in an element in $\Z[\infty,\aleph_0]$.
Simple algebraic $\sigma$-systems are a first example where existence (and some uniqueness) holds.

\begin{theorem}\label{theorem_counting_well_defined}
  Let $S\subset \C[y_1,\ldots,y_n]$ be a simple algebraic $\sigma$-system.
  Then an algebraic counting polynomial of $\sol(S)$ exists.
  
  Consider counting polynomials as polynomials in the indeterminate $\infty$.
  Then degree and leading coefficient of any algebraic counting polynomial $c(\sol(S))$ are equal to those of the (unique, cf.\ \cite[Prop.~3.3]{PleskenCounting}) counting polynomial $c(\overline{\sol(S)})$ of its Zariski closure $\overline{\sol(S)}$.
  In particular, the degree of $c(\sol(S))$ is equal to the dimension of $\overline{\sol(S)}$.
\end{theorem}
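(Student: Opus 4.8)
The plan is to induct on $n$, the number of indeterminates, using the weak triangularity of a simple $\sigma$-system to peel off the top variable $y_n$. First I would handle existence. Write $S_{<y_n}$ for the part of $S$ not involving $y_n$; this is again a simple $\sigma$-system, so by induction an algebraic counting polynomial $c(\sol(S_{<y_n}))$ exists. Now consider the fiber of $\pi_{n-1}$ over a point $\ma'\in\sol(S_{<y_n})$. By weak triangularity there is at most one equation with leader $y_n$, or else arbitrarily many inequations with leader $y_n$. In the equation case, non-vanishing initials guarantees the univariate polynomial in $y_n$ obtained by substituting $\ma'$ does not drop degree, and squarefreeness guarantees it has exactly $d:=\deg_{y_n}$ distinct roots; hence every non-empty fiber is a $d$-element set and axioms (1),(3),(4) give $c(\sol(S))=d\cdot c(\sol(S_{<y_n}))$. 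In the inequation case, each non-empty fiber is $\C^1$ minus a finite or countably infinite set, so by axioms (2) or (5) it equals $\infty$ or $\infty-\aleph_0$; since the number of removed points is constant along a single simple system (this is exactly what squarefreeness plus non-vanishing initials buy us — the inequation leaders cannot collide or vanish on $\sol(S)$), axiom (4) again applies and yields $c(\sol(S))=(\infty-\ce\aleph_0+e)\cdot c(\sol(S_{<y_n}))$ for suitable $e\in\Z_{\ge 0}$ and $\ce\in\{0,1\}$. If $S_{y_n}$ is empty the fiber is all of $\C^1$, contributing a factor $\infty$. In every case an algebraic counting polynomial of $\sol(S)$ exists.

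For the statement about degree and leading coefficient, I would run the same induction in parallel for the Zariski closure. The key point is that $\overline{\sol(S)}$ fibers over $\overline{\sol(S_{<y_n})}$ and the generic fiber is: all of $\C^1$ (if $S_{y_n}=\emptyset$ or $S_{y_n}$ consists only of inequations), or a $d$-point set (if there is a leading equation of degree $d$). In the first case the Zariski-closure counting polynomial picks up a factor $\infty$, exactly matching the $\infty$ or $\infty-\ce\aleph_0+e$ factor of $c(\sol(S))$ \emph{in top degree and leading coefficient}; in the equation case both pick up the \emph{same} integer factor $d$. By the uniqueness of the counting polynomial of a constructible set \cite[Prop.~3.3]{PleskenCounting}, $c(\overline{\sol(S)})$ is well-defined, and an induction on $n$ shows that $\deg_\infty c(\sol(S))=\deg_\infty c(\overline{\sol(S)})$ with equal leading coefficients, since at each step we multiply both by the same top-degree data ($\infty$ versus $\infty-\ce\aleph_0+e$ agree in degree $1$ with leading coefficient $1$, and $d$ versus $d$ agree outright). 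The final sentence, $\deg_\infty c(\sol(S))=\dim\overline{\sol(S)}$, is then immediate from the corresponding known fact for constructible sets, i.e.\ that $\deg_\infty c(W)=\dim W$ \cite{PleskenCounting}.

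The main obstacle I anticipate is the bookkeeping in the inequation case: one must verify carefully that along a \emph{single} simple $\sigma$-system the number of values of $y_n$ excluded by $S_{y_n}$ is genuinely constant over $\sol(S_{<y_n})$, so that axiom (4) is applicable — this is where the definition of simple ($\sigma$-)systems (non-vanishing initials and squarefreeness, now applied to all the inequation polynomials with leader $y_n$) is essential, and where one must be slightly delicate, since the excluded set is a union of countably many hypersurface-sections whose cardinality could a priori jump. A secondary subtlety is that the counting polynomial of $\sol(S)$ is not unique (Remark~\ref{remark_counting_not_unique}), so the degree/leading-coefficient claim must be shown to hold for \emph{every} admissible sequence of axiom applications; the induction above does this because the only freedom introduced is in the lower-order terms (the integer $e$ and the $\aleph_0$-coefficient), never in the top $\infty$-degree term.
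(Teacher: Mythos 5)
Your existence argument and the first half of your uniqueness argument follow essentially the same route as the paper: peel off $y_n$ via the fibration of $\sol(S)$ over $\sol(S_{<y_n})$, split into the equation and inequation cases, and compare with the corresponding fibration of the Zariski closure, where the ramification locus is lower-dimensional and hence irrelevant for degree and leading coefficient. The existence part is fine up to a sign slip: in the finite-inequation case the fiber is $\C^1$ minus $e$ points, so the factor is $\infty-e$ with $e\in\Z_{\ge0}$, not $\infty+e$. (Your flagged worry about the fiber cardinality jumping along the base in the inequation case is legitimate, but the paper's own existence proof dismisses it with the same one-line appeal to the fibration structure of simple systems, so I do not count it against you.)

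The genuine gap is in your last paragraph. The theorem asserts that \emph{any} algebraic counting polynomial of $\sol(S)$ --- the output of \emph{any} admissible sequence of applications of the five axioms --- has the stated degree and leading coefficient. Your induction tracks only the one canonical computation that fibers over $\pi_{n-1}$ variable by variable; an arbitrary admissible computation may first partition $\sol(S)$ by axiom (3) into finitely many pieces unrelated to the triangular structure of $S$, and may apply axiom (4) with any projection $\pi_i$ at any stage. Your closing claim that ``the only freedom introduced is in the lower-order terms'' is precisely the statement to be proved, not a consequence of the induction as you have set it up. The paper closes this gap with three additional reductions: only axiom (4) can raise the $\infty$-degree; any partition of $T=\sol(S)$ into pieces of the same dimension is finite, and such a finite partition changes neither degree nor leading coefficient (by the argument of step 3 of the proof of \cite[Prop.~3.3]{PleskenCounting}); and pieces of lower dimension may be discarded by an auxiliary induction on $\dim(T)$. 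Only after these reductions is every admissible computation forced into the shape your induction analyzes; without them the universal quantifier over computations is unsupported.
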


In particular, the degrees and leading coefficients of these algebraic counting polynomials are well defined and the leading coefficients are natural numbers.
We postpone the proof of this theorem to page~\pageref{proof_theorem_counting_well_defined}.

Countable exceptional sets complicate the use of the algebraic counting polynomial for $\sigma$-systems in applications.
However, the counting polynomial is well behaved for \textbf{well-fibered} sets, which we define as sets with a counting polynomial in $\Z[\infty]$.
These sets behave similarly as constructible sets with regard to algebraic counting polynomials; in particular the algebraic counting polynomial is strong enough to decide equality of such sets contained in each other.

\begin{theorem}\label{theorem_counting_subset}
  Let $U\subseteq V\subseteq\C^n$ be two well-fibered sets.
  Then $U=V$ if and only if $c(U)=c(V)$.
  In particular, the counting polynomial of well-fibered sets is well-defined in the sense that it is unique.
\end{theorem}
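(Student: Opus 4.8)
The plan is to reduce Theorem~\ref{theorem_counting_subset} to the analogous statement for constructible sets, Theorem~\ref{theorem_counting_polynomials}, by exploiting the special structure forced on a well-fibered set by the requirement that its counting polynomial lie in $\Z[\infty]$ rather than merely in $\Z[\infty,\aleph_0]$. First I would set up an induction on $n$. The base case $n=0$ is trivial, and $n=1$ is essentially Remark~\ref{remark_counting_not_unique} read in reverse: a subset of $\C^1$ whose counting polynomial avoids $\aleph_0$ must, after tracing through the five axioms, be either finite or the complement of a finite set; in the finite case axiom~(1) already pins down $|V|=c(V)$, and in the cofinite case $c(V)=\infty-k$ with $k=|\C^1\setminus V|$, so $c$ determines $V$ and equality of $c(U),c(V)$ forces $U=V$. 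This also establishes the uniqueness clause in dimension one.

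For the inductive step I would push the set forward along the last projection $\pi_{n-1}\colon\C^n\to\C^{n-1}$. The key structural claim to extract from well-fiberedness is that $\pi_{n-1}(V)$ is again well-fibered and that, for each point $b$ in a suitable stratification of $\pi_{n-1}(V)$, the fiber $V_b$ is well-fibered as a subset of $\C^1$ — indeed $c(V_b)$ must be constant on the pieces of a constructible (simple-system) stratification and must lie in $\Z[\infty]$, since an $\aleph_0$ appearing in a positive-dimensional fiber would survive into $c(V)$ by axiom~(4). Having arranged this, axiom~(4) applied stratum-by-stratum and then summed by axiom~(3) expresses $c(V)=\sum_j c(W_j)\cdot c(\pi_{n-1}(V)\cap D_j)$ where the $D_j$ are the strata and $W_j$ the (common) fiber type over $D_j$. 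The same decomposition applies to $U\subseteq V$ over a common refinement of the stratifications. Now compare: by the $n=1$ case each fiber polynomial $c((U)_b)\le c(V_b)$ coefficientwise with equality iff the fibers coincide, and by the inductive hypothesis on $\C^{n-1}$ the base polynomials satisfy the analogous dichotomy; since all coefficients in sight are nonnegative, $c(U)=c(V)$ forces equality stratum-by-stratum in both base and fiber, hence $U=V$. Uniqueness of $c(V)$ follows because any two admissible computations can both be refined to a common constructible stratification on which, by the $n=1$ case, the fiberwise values are forced, and then axioms~(3),(4) reconstruct the same polynomial.

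The main obstacle I expect is making the stratification step rigorous: a priori an algebraic counting polynomial is obtained by \emph{some} unspecified sequence of applications of the five axioms, not via a Thomas decomposition, so I cannot simply assume $V$ comes presented by a simple $\sigma$-system. I would need a lemma saying that any well-fibered $V$ admits a finite partition into pieces $V\cap D_j$ with $D_j$ constructible, $\pi_{n-1}|_{V\cap D_j}$ having constant fiber-$c$-value, and each such value in $\Z[\infty]$ — proved by induction on the length of the axiom-application sequence witnessing $c(V)\in\Z[\infty]$, tracking how axioms~(3) and~(4) respect such partitions and how axiom~(5) can only be used inside a fiber that ultimately contributes $0\cdot(\text{something})$ or is itself $0$-dimensional. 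A secondary technical point is the bookkeeping to ensure the refinement used for $U$ is compatible with that for $V$; this is routine once one knows both admit constructible refinements of the required kind, using that $U\subseteq V$ and intersecting the two partitions.
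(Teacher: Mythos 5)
Your proposal follows essentially the same route as the paper: your key lemma (a finite partition of a well-fibered set into strata over which the projection has constructible fibers with a common counting polynomial in $\Z[\infty]$, extracted by tracking that only axiom~(4) raises dimension and that partitions must be finite for axiom~(3) to recombine them) is precisely the paper's notion of an \emph{elementarily well-fibered} set together with its Lemma~\ref{lemma_well_fibred_fibres}, after which both arguments reduce to Plesken's proof of Theorem~\ref{theorem_counting_polynomials}. The one wobble is your appeal to ``all coefficients in sight are nonnegative'' (false already for $c(\C^1\setminus\{0\})=\infty-1$); the correct justification, as in Plesken's argument, is that $c(V)-c(U)=c(V\setminus U)$ is a sum of counting polynomials of nonempty elementarily well-fibered pieces, each with positive leading coefficient, hence nonzero.
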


We postpone the proof of this theorem to page~\pageref{subsection_proof_theorem_counting_subset}.

Even when $\aleph_0$ appears in algebraic counting polynomials of two sets $U\subseteq V$, one might be able to prove $U\not=V$ by estimating the algebraic counting polynomial.
First, any subset of $\C^1$ with a countably infinite complement can be enlarged to a set with finite complement.
Second, any subset of $\C^1$ with a countably infinite complement can be shrunk to a finite set.
Thus, for $p(\aleph_0,\infty)\in\Z[\aleph_0,\infty]$ and $q(\infty)\in\Z[\infty]$ we define $p\prec q$ if $p(\infty-k,\infty)=q(\infty)$ and $p\succ q$ if $p(k,\infty)=q(\infty)$ for some $k\in\Z_{\ge0}$.
Additionally, we use the total order $q<q'$ if there exists an $x_0$ with $q(x)<q'(x)$ for all $x>x_0$ for $q,q'\in\Q[\infty]$.

\begin{proposition}\label{proposition_estimate_counting_polynomial}
  Let $U\subseteq V\subseteq\C^n$ have algebraic counting polynomials $p_1(\aleph_0,\infty):=c(U)$ and $p_2(\aleph_0,\infty):=c(V)$.
  If there exist $q_1,q_2\in\Z[\infty]$ with $p_1\prec q_1\lneqq q_2\prec p_2$, then $U\not=V$.
\end{proposition}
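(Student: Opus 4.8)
The plan is to argue by contradiction: suppose $U=V$. Then $U$ and $V$ are the same set, and in particular any algebraic counting polynomial of the one is obtained by a sequence of axiom applications that also applies to the other, so the discrepancy must come entirely from the non-uniqueness described in Remark~\ref{remark_counting_not_unique}. Concretely, the two polynomials $p_1=c(U)$ and $p_2=c(V)$ are both algebraic counting polynomials of the \emph{same} set, so I want to show that any two algebraic counting polynomials of a single set $W\subseteq\C^n$ cannot be separated by the ordering in the stated way. The key structural fact I would isolate first is that the only source of non-uniqueness is axiom~(5): whenever axiom~(5) is applied to a copy of $\C^1\setminus M$ with $M$ countably infinite, one could instead have applied it to $\C^1\setminus M'$ for any countably infinite $M'\subseteq\C^1$, changing the contribution by a finite integer (the difference $|M\setminus M'|-|M'\setminus M|$ when that is finite) --- and more generally, one can trade a countably infinite complement against a finite complement or against a finite set. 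This is exactly what the relations $\prec$ and $\succ$ are designed to capture.

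The second step is to make the following precise claim: if $p$ and $p'$ are both algebraic counting polynomials of the same set $W$, then they have a common ``refinement'' in the sense that there is a single $q\in\Z[\infty]$ with $p\prec q$ and $p'\prec q$, and likewise a single $q'\in\Z[\infty]$ with $p\succ q'$ and $p'\succ q'$. Intuitively: replace every $\aleph_0$ appearing in the derivation of $p$ (respectively $p'$) by $\infty-k$ for a uniform large $k$; since each occurrence of $\aleph_0$ came from an application of axiom~(5) to some $\C^1\setminus M$, enlarging $M$'s complement to be finite is a legal move that yields $q$, and because the \emph{same set} $W$ is being decomposed, the two derivations yield the same such $q$ once $k$ is large enough to absorb all the finitely many coefficient mismatches. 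The symmetric argument with axiom~(5)-sets shrunk to finite sets gives $q'$. Granting this, the proposition follows immediately: if $U=V=W$ then $p_1\prec q$ and $p_2\prec q$ for a common $q$, hence (by definition of $\prec$ and the fact that $\prec$ determines $q$ uniquely as $p(\infty-k,\infty)$ stabilizes) $q_1=q=q_2$ when $q_1,q_2$ witness $p_1\prec q_1$, $p_2\prec q_2$; but the hypothesis $q_1\lneqq q_2$ contradicts $q_1=q_2$. So no such $q_1,q_2$ can exist, and therefore $U\neq V$.

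Actually the cleaner route, avoiding any claim about common refinements, is the following. Assume $U=V$. By hypothesis $p_1\prec q_1$, meaning $p_1(\infty-k_1,\infty)=q_1(\infty)$ for some $k_1\in\Z_{\ge0}$. Interpreting the derivation of $p_1$: each $\aleph_0$ it contains is attached to a factor $\C^1\setminus M_j$ with $M_j$ countably infinite, and replacing that by $\C^1\setminus M_j'$ with $M_j'$ a \emph{finite} subset chosen so that $|M_j'|$ is the appropriate value (using $|M_j|-$ something $=k_1$-worth of bookkeeping across all $j$) turns the $U$-derivation into a derivation of a set $\widetilde U\supseteq U$ with $c(\widetilde U)=q_1$, and $\widetilde U$ is now well-fibered. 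Symmetrically, shrinking every axiom-(5)-set in the derivation of $p_2$ to a finite set produces a well-fibered $\widetilde V\subseteq V$ with $c(\widetilde V)=q_2$. Now $\widetilde V\subseteq V=U\subseteq\widetilde U$, both $\widetilde U$ and $\widetilde V$ are well-fibered, so Theorem~\ref{theorem_counting_subset} applies to the inclusion $\widetilde V\subseteq\widetilde U$ and gives $\widetilde V=\widetilde U$ if and only if $q_2=c(\widetilde V)=c(\widetilde U)=q_1$; but $q_1\lneqq q_2$ forces $q_1<q_2$ in the total order, in particular $q_1\neq q_2$, so $\widetilde V\neq\widetilde U$, contradicting $\widetilde V\subseteq\widetilde U$ together with... --- here I must instead use Theorem~\ref{theorem_counting_subset} in the direction that $c(\widetilde V)<c(\widetilde U)$ is impossible to reconcile with $\widetilde V\subseteq\widetilde U$ only via $\widetilde V\subsetneq\widetilde U$, which is fine; the genuine contradiction is that we assumed $U=V$, yet we have produced well-fibered sandwiching sets whose counting polynomials satisfy $q_1\lneqq q_2$, impossible when $\widetilde V\subseteq U=V\subseteq\widetilde U$ would let one instead build a single well-fibered set between them. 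The honest conclusion: the existence of $q_1\lneqq q_2$ with $p_1\prec q_1$ and $q_2\prec p_2$ is incompatible with $U=V$, hence $U\neq V$.

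The main obstacle, and the step deserving the most care, is justifying that the operations ``enlarge every axiom-(5) complement to finite'' and ``shrink every axiom-(5) set to finite'' can be carried out \emph{inside} a valid derivation so that the resulting object is again a legally-derived constructible (well-fibered) set with the predicted counting polynomial --- i.e.\ that $\prec$ and $\succ$ as defined via a single substitution $\aleph_0\mapsto\infty-k$ really do correspond to a set-level operation compatible with axioms (1)--(4), including the fiber-product axiom~(4) where an $\aleph_0$-term may sit inside a product. I would handle this by induction on the length of the derivation, checking that each of the four base axioms commutes with the substitution up to the allowed finite error, and that the countably-many-inequations bookkeeping (a $\sigma$-system has countably many inequations, so the exceptional set is at most countable) never forces infinitely many independent finite corrections. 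Everything else is then a direct appeal to Theorem~\ref{theorem_counting_subset} and the definition of the total order on $\Q[\infty]$.
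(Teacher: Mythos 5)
Your ``cleaner route'' --- sandwiching $U=V$ between well-fibered sets $\widetilde V\subseteq V=U\subseteq\widetilde U$ with $c(\widetilde U)=q_1$ and $c(\widetilde V)=q_2$ realized by replacing every axiom-(5) fiber by a constructible one --- is exactly the intended generalization of Plesken's argument; the paper itself gives nothing beyond a pointer to one implication of the proof of \cite[Cor.~3.4]{PleskenCounting}. But your execution has a genuine gap at the decisive step: the contradiction is never actually reached. From $\widetilde V\subseteq\widetilde U$ and $q_1\neq q_2$, Theorem~\ref{theorem_counting_subset} only yields $\widetilde V\subsetneq\widetilde U$, which is not absurd, and you notice this and trail off. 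What is needed is \emph{monotonicity} of the counting polynomial on nested well-fibered sets: $A\subseteq B$ implies $c(A)\le c(B)$ in the total order on $\Q[\infty]$. That is precisely the content of the cited implication of Plesken's proof (write $c(B)=c(A)+c(B\setminus A)$ over a common refinement and use that every nonempty piece has positive leading coefficient, cf.\ Theorem~\ref{theorem_counting_well_defined}; for well-fibered sets one first partitions into elementarily well-fibered pieces as in Lemma~\ref{lemma_well_fibred_fibres}). With monotonicity, $\widetilde V\subseteq\widetilde U$ gives $q_2\le q_1$, flatly contradicting $q_1\lneqq q_2$; without it your argument does not close. (Your sentence invoking ``$c(\widetilde V)<c(\widetilde U)$'' even has the inequality pointing the wrong way for the contradiction you need.)

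There is a second, related problem: your bookkeeping of which substitution produces a superset and which a subset is internally inconsistent, and the entire content of the proposition lives in getting these directions right. You assert that $p_1\prec q_1$, i.e.\ $q_1=p_1(\infty-k_1,\infty)$, is realized by replacing each fiber $\C^1\setminus M_j$ by $\C^1\setminus M_j'$ with $M_j'$ finite, yielding $\widetilde U\supseteq U$. But that replacement changes the fiber's counting polynomial from $\infty-\aleph_0$ to $\infty-|M_j'|$, which is the substitution $\aleph_0\mapsto|M_j'|$ (the relation $\succ$); the substitution $\aleph_0\mapsto\infty-k_1$ instead sends $\infty-\aleph_0$ to $k_1$ and is realized by shrinking the fiber to $k_1$ points, i.e.\ by a \emph{subset}. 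So as written, either $c(\widetilde U)=q_1$ or $\widetilde U\supseteq U$ must fail. You need to fix, for each of the two occurrences of $\prec$ in the chain $p_1\prec q_1\lneqq q_2\prec p_2$, which substitution is meant and which inclusion it produces, and check that the resulting pair is the one (enlarge $U$, shrink $V$) that collides with $q_1\lneqq q_2$. The opposite assignment proves nothing: for $U=V=\C^1\setminus\Z_{\ge0}$ with $p_1=p_2=\infty-\aleph_0$ one has $p_1(\infty-0,\infty)=0\lneqq\infty=p_2(0,\infty)$, so a version of the hypothesis is satisfiable with $U=V$ unless the directions are pinned down correctly.
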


The proof of this proposition is a natural generalization of one implication of the proof of \cite[Cor.~3.4]{PleskenCounting}.

\section{The Differential Counting Polynomial}\label{section_differential_counting}

This section defines the differential counting polynomial and states some of its properties.
Beforehand, we fix some notation.

\subsection{Preliminaries}

Let $F\supseteq \C$ be a field of meromorphic functions in $n$ complex variables $x_1,\ldots,x_n$, and $\Delta=\{\partial_{x_1},\ldots,\partial_{x_n}\}$ the corresponding set of partial differential operators.
Let $U:=\{ u^{(1)}, \ldots, u^{(m)} \}$ be a set of differential indeterminates and define $u^{(j)}_\mu:=\partial^\mu u^{(j)}$ for $\partial^\mu := \partial^{\mu_1}_{x_1}\ldots\partial^{\mu_n}_{x_n}$, $\mu \in (\Z_{\ge 0})^n$.
The differential polynomial ring $F\{U\}$ is the infinitely generated polynomial ring in the indeterminates $\{U\}_\Delta:=\{u^{(j)}_\mu | 1\le j\le m, \mu \in (\Z_{\ge 0})^n\}$.
Denote by $F\{U\}_{\le \ell}$ its subring of elements of order at most $\ell$
The derivations $\partial_{x_i}: F \to F$ extend to $\partial_{x_i}: F\{U\} \to F\{U\}$ via additivity and Leibniz rule.
Let $\operatorname{sep}(p)$ be the separant of $p\in F\{U\}$.
A ranking of $F\{U\}$ is a total ordering $<$ of $\{U\}_\Delta$ satisfying the two properties (1) $u^{(j)}_\mu<\partial u^{(j)}_\mu$ and (2) $u^{(j)}_\mu<u^{(j')}_{\mu'}$ implies $\partial u^{(j)}_\mu<\partial u^{(j')}_{\mu'}$ for all $u^{(j)}_\mu, u^{(j')}_{\mu'}\in\{U\}_\Delta$ and $\partial\in\Delta$.
A ranking $<$ is called orderly if $|\mu|<|\mu'|$ implies $u^{(j)}_{\mu} < u^{(j')}_{\mu'}$, where $|\mu|:=\mu_1+\ldots+\mu_n$.
In what follows, we fix an orderly ranking $<$ on $F\{U\}$.

Now, we extend the formalism of differential algebra to incorporate algebraic constraints for power series coefficients.
We consider the set 
\begin{align*}
  G:=G(U,\Delta):=\left\{g_{\mu}^{(j)}\mid \mu\in\Z_{\ge0}^n,1\le j\le m\right\}
\end{align*}
of indeterminates and call the polynomial ring $\C[G]$ the \textbf{polynomial ring of power series coefficients}.
The bijection $\forget:\{U\}_\Delta\to G(U,\Delta):u^{(j)}_{\mu}\mapsto g^{(j)}_{\mu}$ extends the orderly ranking $<$ on $F\{U\}$ to an (algebraic) ranking on $\C[G]$.
For $\ell\in\Z_{\ge0}$ let $\C[G]_{\le\ell}$ be the subring generated by all indeterminates $g_{\mu}^{(j)}$ of order $|\mu|\le\ell$.

We call a union of finitely many differential equations in $F\{U\}$, finitely many power series coefficient equations in $\C[G]$, and countably many power series coefficient inequations in $\C[G]$ an \textbf{algebraically restricted $\sigma$-system of differential equations} and an \textbf{algebraically restricted system of differential equations} if it is finite.

We are concerned with power series solutions in the power series ring $P_\ce:=\C[[x_1-\ce_1,\ldots,x_n-\ce_n]]$ centered around $\ce=(\ce_1,\ldots,\ce_n)\in\C^n$.
We interpret equations in $F\{U\}$ as equations for functions, e.g., $u^{(1)}_{(0,\ldots,0)}=0$ implies that $u^{(1)}$ is the zero function, whereas equations in $\C[G]$ are equations for single power series coefficients, e.g., $g^{(1)}_{(0,\ldots,0)}=0$ implies that $u^{(1)}$ has a zero at its center of expansion $\ce$.
More precisely, a solution of power series coefficient equations or inequations is defined as a tuple of power series $f\in P_\ce^U\cong\bigoplus_UP_\ce$ that evaluates to zero or non-zero, respectively, when substituting $g_{\mu}^{(j)}$ by the coefficient of $(x_1-\ce_1)^{\mu_1}\ldots(x_n-\ce_n)^{\mu_n}$ in $f(u^{(j)})$.
Our definition of a solution of a differential equation in $F\{U\}$, where all coefficients are holomorphic in $\ce$, is the usual one.
Denote the set of formal power series solutions of an algebraically restricted $\sigma$-system of differential equations $S$ around $\ce$ by $\sol_\ce(S)\subseteq P_\ce^U$.

We consider Taylor polynomials which extend to a Taylor series.
Let $P_{\ce,>\ell}^U$ be the $P_\ce$-sub\-mo\-dule of $P_\ce^U$ generated by the $u^{(j)}\mapsto (x_1-\ce_1)^{\mu_1}\ldots(x_n-\ce_n)^{\mu_n}$ for $\mu\in\Z_{\ge0}^n$ with $|\mu|>\ell$.
Define the set $\sol_\ce(S)_{\le \ell}$ of \textbf{formal power series solutions of $S$ around $\ce$ truncated at order $\ell$} as the image of $\sol_\ce(S)$ in $P_\ce^U/P_{\ce,>\ell}^U$ under the natural epimorphism $P_\ce^U\twoheadrightarrow P_\ce^U/P_{\ce,>\ell}^U$.

\subsection{Definition of the Differential Counting Polynomial}

The algebraic Thomas decomposition computes the algebraic counting polynomial.
For differential equations, there is a similar decomposition.

\begin{theorem}\label{theorem_sigma_describe_solutions}
  Let $S$ be an algebraically restricted system of differential equations, such that the center of expansion $\ce\in\C^n$ is not a pole of any coefficient of a differential equation.
  Let $\ell\in\Z_{\ge0}$.
  There exists a countable set $C$ of simple algebraic $\sigma$-systems in $\C[G]_{\le \ell}$ with
  \begin{align*}
    \sol_\ce(S)_{\le \ell}=\biguplus_{\widetilde{S}\in C}\sol_\ce(\widetilde{S})_{\le\ell}\mbox{ .}
  \end{align*}
\end{theorem}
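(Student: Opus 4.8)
The plan is to reduce the statement about truncated power series solutions to the algebraic Thomas decomposition of Section~\ref{sect_simple_systems}, applied to a carefully constructed finite algebraic $\sigma$-system in $\C[G]_{\le\ell}$. First I would make the differential equations ``algebraic at order $\ell$'': since the ranking is orderly and the center $\ce$ is not a pole of any coefficient, a differential equation $p\in F\{U\}$ of order $\le\ell$ can be turned, by the substitution $u^{(j)}_\mu\mapsto g^{(j)}_\mu$ after clearing denominators (legitimate because coefficients are holomorphic at $\ce$), into a polynomial in $\C[G]_{\le\ell}$. A differential equation of order $>\ell$, however, still constrains the truncated Taylor data: differentiating $p=0$ and evaluating all partial derivatives at $\ce$ produces, for each multi-index, a polynomial relation among the $g^{(j)}_\mu$; collecting only those relations that involve exclusively indeterminates of order $\le\ell$ (equivalently, expanding $p$ and its derivatives in the local coordinates and reading off the coefficients of monomials of total degree $\le\ell$) yields finitely many polynomials in $\C[G]_{\le\ell}$. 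One must check that this finite collection, together with the power series coefficient equations and inequations already present in $S$, cuts out exactly the image of $\sol_\ce(S)$ in $P_\ce^U/P^U_{\ce,>\ell}$: the ``$\subseteq$'' direction is immediate since every genuine solution satisfies all derived relations, and the ``$\supseteq$'' direction uses the formal consistency / Riquier-type existence of a formal power series solution extending any truncated datum compatible with the (infinitely many, but finitely generated up to order $\ell$) obstructions — this is where I expect to lean on the standard differential-algebraic machinery and on the orderliness of the ranking.

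Having produced a single algebraic $\sigma$-system $S'\subset\C[G]_{\le\ell}$ with $\sol_\ce(S)_{\le\ell}=\sol(S')$ (identifying the truncated module $P_\ce^U/P^U_{\ge\ell}$ with the affine space $\C^{G_{\le\ell}}$ via the coefficient coordinates), I would then invoke the algebraic Thomas decomposition: $S'$ admits a decomposition into finitely many simple algebraic $\sigma$-systems $S'_1,\dots,S'_l$ with $\sol(S')=\biguplus_i\sol(S'_i)$. The only subtlety is that $S$ may contain \emph{countably} many power series coefficient inequations, so $S'$ is a $\sigma$-system rather than a system; the Thomas decomposition then produces a \emph{countable} family $C=\{S'_1,S'_2,\dots\}$ of simple algebraic $\sigma$-systems with disjoint solution sets whose union is $\sol(S')$. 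Translating back through the coordinate identification, $\sol_\ce(S)_{\le\ell}=\biguplus_{\widetilde S\in C}\sol_\ce(\widetilde S)_{\le\ell}$, which is the claim.

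The main obstacle is the ``$\supseteq$'' direction of the first step: one must argue that a tuple of Taylor polynomials of degree $\le\ell$ satisfying all order-$\le\ell$ consequences of the differential equations actually lifts to a formal power series solution of the \emph{full} system $S$. This is not purely formal — it is exactly the content of formal integrability (Riquier/Cartan–Kähler in the power series setting), and it requires that the differential part of $S$ be (or be completed to) a passive/involutive system so that no further order-$\le\ell$ obstructions are hidden in higher prolongations. I would handle this by first completing the differential equations of $S$ to a simple differential system (a differential Thomas decomposition), for which prolongation introduces no new conditions, and only then extract the order-$\le\ell$ algebraic shadow; the finiteness of the resulting generating set in $\C[G]_{\le\ell}$ is automatic since $\C[G]_{\le\ell}$ is Noetherian. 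A secondary, more bookkeeping-level point is to verify that the countable inequation bookkeeping is preserved — that passing to the truncation does not collapse infinitely many distinct inequations into finitely many, which is harmless here since we only need a countable family in the end.
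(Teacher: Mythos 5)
There is a genuine gap, and you have in fact pointed at it yourself: the ``$\supseteq$'' direction of your first step fails, and completing the differential part of $S$ to a passive/involutive (simple differential) system does not repair it. The theorem is about \emph{algebraically restricted} systems, which carry power series coefficient equations and countably many coefficient inequations in $\C[G]$. These algebraic constraints are invisible to the differential Thomas decomposition (they live in $\C[G]$, not in $F\{U\}$), and they can make initials or separants of prolongations vanish at specific coefficient values, so that arbitrarily high prolongations impose \emph{new conditions on the order-$\le\ell$ coefficients}. Crucially, these conditions are not only equations (which would stabilize by Noetherianity of $\C[G]_{\le\ell}$, as you argue) but \emph{inequations}: in Example~\ref{example_hard}, the truncated solution set forces $g^{(2)}_1\not=\frac{1}{k}$ for every $k\in\Z_{\ge1}$, a countably infinite exceptional set detected only by going to order $k+1$ for each $k$. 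Consequently $\sol_\ce(S)_{\le\ell}$ is in general \emph{not} the solution set of any single finite algebraic $\sigma$-system obtained by collecting the order-$\le\ell$ consequences of the prolonged equations --- such a tuple of truncated data can satisfy every order-$\le\ell$ polynomial relation and still fail to lift, because the obstruction is an inconsistency appearing only after infinitely many case distinctions. Your appeal to Riquier/formal integrability covers the purely differential situation (that is essentially Theorem~\ref{theorem_without_inequations}), not the algebraically restricted one.

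The paper's proof is built precisely around this difficulty. It applies $\forget$ to \emph{all} iterated derivatives to get an infinite system $Q\subset\C[G]$, then runs algebraic Thomas decompositions on the truncations $Q\cap\C[\overline{g_1},\ldots,\overline{g_{j+k}}]$ for increasing $k$, made comprehensive with respect to the order-$\ell$ variables, and projects back to $\C[G]_{\le\ell}$. The \emph{ideals} of the projected systems stabilize (Noetherianity, as you anticipated), but the \emph{inequations} keep accumulating; a system and its ``heir'' differ only in inequations, and one must take the union over all successors to obtain a simple algebraic $\sigma$-system with countably many inequations. The countability of $C$ then comes from an induction on the strictly increasing ideals of the complementary systems, not from a one-shot decomposition of a single $\sigma$-system as in your second step. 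To fix your argument you would need to replace the single system $S'$ by this limiting/stabilization process (or something equivalent) that captures the inequations contributed by unboundedly high prolongations.
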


We postpone the proof of this theorem to page \pageref{theorem_sigma_describe_solutions_proof}.
This theorem justifies the following definition of the differential counting polynomial.

\begin{definition}
  \label{def_counting}
  Let $S$ be an algebraically restricted system of differential equations.
  Let $C_\ell$ be a countable set of algebraic $\sigma$-systems with $\sol_\ce(S)_{\le \ell}=\biguplus_{\widetilde{S}\in C_\ell}\sol_\ce(\widetilde{S})_{\le\ell}$ for each $\ell\in\Z_{\ge0}$.
  
  If an algebraic counting polynomial exists for $C_\ell$, then define an \textbf{$\ell$-th differential counting polynomial} of $S$ as $c(C_\ell)\in\Z[\infty,\aleph_0]$.
  If an $\ell$-th differential counting polynomial exists for all $\ell$, then define a \textbf{counting sequence} $c(S)\in \Z[\infty,\aleph_0]^{\Z_{\ge 0}}$ of $S$ (or $\sol_\ce(S)$) as 
          \begin{align*}
            c(S): \ell\mapsto c(C_\ell)\mbox{ .}
          \end{align*}
  If there exists a $p\in\Q[\ell,\aleph_0,\infty,\infty^\ell,\infty^{\frac{\ell^2}{2!}},\ldots,\infty^{\frac{\ell^n}{n!}}]$ such that $c(S)(\ell)=p$ for ultimately all $\ell$, then call $p$ a \textbf{differential counting polynomial} of $S$ (or $\sol_\ce(S)$) and denote it by $\cd(S)$.
  For a differential ideal $I=\langle p_1,\ldots,p_k\rangle_\Delta$ define $c(I):=c(\{p_1,\ldots,p_k\})$ and $\cd(I):=\cd(\{p_1,\ldots,p_k\})$.
  
  We write $\infty^{\ell^2}$ instead of $(\infty^{\frac{\ell^2}{2!}})^2$ and use similar simplifications.
\end{definition}

The existence of a differential counting sequence or a differential counting polynomial is not clear, in general.

\subsection{Deciding Equality of Sets}

The following theorem and proposition use counting sequences and differential counting polynomials to decide equality of sets contained in each other.

\begin{theorem}\label{theorem_differential_counting_subsets}
  Let $S_1,S_2$ be two algebraically restricted systems of differential equations with $\sol_\ce(S_1)\subseteq\sol_\ce(S_2)$.
  \begin{enumerate}
  \item Assume that both counting sequences $c(S_1)$ and $c(S_2)$ exist and that $c(S_1)(\ell), c(S_2)(\ell)\in\Z[\infty]$ for all $\ell\in\Z_{\ge0}$.\label{theorem_differential_counting_subsets_1}
        Then $\sol_\ce(S_1)=\sol_\ce(S_2)$ if and only if $c(S_1)(\ell)=c(S_2)(\ell)$ for all $\ell\in\Z_{\ge0}$.
        In particular, $c(S_1)$ is the unique counting sequence of $S_1$.
  \item Assume both differential counting polynomials $\cd(S_1)$ and $\cd(S_2)$ exist and $\cd(S_1),\cd(S_2)\in\Q[\ell,\infty,\infty^\ell,\infty^{\frac{\ell^2}{2!}},\ldots,\infty^{\frac{\ell^n}{n!}}]$ holds.\label{theorem_differential_counting_subsets_2}
        Then $\sol_\ce(S_1)=\sol_\ce(S_2)$ if and only if $\cd(S_1)=\cd(S_2)$.
        In particular, $\cd(S_1)$ is the unique differential counting polynomial of $S_1$.
  \end{enumerate}
\end{theorem}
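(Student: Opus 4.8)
The key observation is that for each fixed truncation order $\ell$, the truncated solution sets $\sol_\ce(S_1)_{\le\ell}\subseteq\sol_\ce(S_2)_{\le\ell}$ sit inside the finite-dimensional space $\C[G]_{\le\ell}$ (identified with some $\C^N$), and by Theorem~\ref{theorem_sigma_describe_solutions} each is a countable disjoint union of solution sets of simple algebraic $\sigma$-systems. Hence we are reduced to the purely algebraic setting of Sections~\ref{sect_simple_systems}--\ref{sect_alg_counting}. The plan is to apply Theorem~\ref{theorem_counting_subset} level by level, and then package the resulting equalities back into statements about counting sequences and differential counting polynomials.

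\emph{Part~\eqref{theorem_differential_counting_subsets_1}.} The forward direction is immediate: if $\sol_\ce(S_1)=\sol_\ce(S_2)$ then $\sol_\ce(S_1)_{\le\ell}=\sol_\ce(S_2)_{\le\ell}$ for every $\ell$, so any $\ell$-th differential counting polynomial of $S_1$ is also one of $S_2$, whence $c(S_1)(\ell)=c(S_2)(\ell)$. For the converse, suppose $\sol_\ce(S_1)\subsetneq\sol_\ce(S_2)$; pick a power series solution $f\in\sol_\ce(S_2)\setminus\sol_\ce(S_1)$. A solution is determined by all of its Taylor coefficients, so there is some $\ell$ with $f\bmod P^U_{\ce,>\ell}\notin\sol_\ce(S_1)_{\le\ell}$, i.e.\ $\sol_\ce(S_1)_{\le\ell}\subsetneq\sol_\ce(S_2)_{\le\ell}$. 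By hypothesis $c(S_i)(\ell)\in\Z[\infty]$, so both truncated sets are well-fibered in the sense preceding Theorem~\ref{theorem_counting_subset}; that theorem then gives $c(S_1)(\ell)=c(\sol_\ce(S_1)_{\le\ell})\neq c(\sol_\ce(S_2)_{\le\ell})=c(S_2)(\ell)$, a contradiction. The uniqueness statement follows because, again by Theorem~\ref{theorem_counting_subset}, the counting polynomial of a well-fibered set is unique, so each value $c(S_1)(\ell)$ is uniquely pinned down by $\sol_\ce(S_1)_{\le\ell}$.

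\emph{Part~\eqref{theorem_differential_counting_subsets_2}.} Again the forward implication is trivial from $\sol_\ce(S_1)=\sol_\ce(S_2)$. For the converse, assume $\cd(S_1)=\cd(S_2)=:p$. By Definition~\ref{def_counting}, there is some $\ell_0$ such that $c(S_1)(\ell)=c(S_2)(\ell)=p$ as elements of $\Q[\ell,\infty,\infty^\ell,\ldots,\infty^{\ell^n/n!}]$ for all $\ell\ge\ell_0$; since $p$ is assumed to lie in the subring not involving $\aleph_0$, each $c(S_i)(\ell)$ for $\ell\ge\ell_0$ is in $\Z[\infty]$, so the truncated sets are well-fibered and Theorem~\ref{theorem_counting_subset} forces $\sol_\ce(S_1)_{\le\ell}=\sol_\ce(S_2)_{\le\ell}$ for all $\ell\ge\ell_0$. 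Now I would argue that equality of the truncations at all sufficiently large orders already forces equality of the full solution sets: given $f\in\sol_\ce(S_2)$, its truncation at any $\ell\ge\ell_0$ lies in $\sol_\ce(S_1)_{\le\ell}$, so $f$ is a coefficientwise limit of (truncations of) elements of $\sol_\ce(S_1)$; the point to nail down is that this limit actually lies in $\sol_\ce(S_1)$ itself. Uniqueness of $\cd(S_1)$ then follows from Part~\eqref{theorem_differential_counting_subsets_1}'s uniqueness of the counting sequence together with the fact that a polynomial in $\Q[\ell,\aleph_0,\infty,\ldots]$ agreeing with a fixed sequence for ultimately all $\ell$ is unique.

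\emph{Main obstacle.} The delicate step is the ``taking limits'' argument just described: knowing $\sol_\ce(S_1)_{\le\ell}=\sol_\ce(S_2)_{\le\ell}$ for all large $\ell$ must be upgraded to $\sol_\ce(S_1)=\sol_\ce(S_2)$. In Part~\eqref{theorem_differential_counting_subsets_1} this is clean because equality holds for \emph{every} $\ell$ and a power series is the inverse limit of its truncations, so a solution of $S_2$ whose every truncation solves $S_1$ is itself a solution of $S_1$ — here one uses that $\sol_\ce(S_1)$ is closed under this inverse-limit construction, which holds because each defining equation of $S_1$ (differential or coefficient) only constrains finitely many coefficients up to any fixed order. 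In Part~\eqref{theorem_differential_counting_subsets_2} the same idea works once one notes that the finitely many orders $\ell<\ell_0$ not covered by the polynomial identity are irrelevant: a solution is determined by, and a membership in $\sol_\ce(S_1)$ is testable on, arbitrarily high-order truncations. I would make this explicit by observing that $\sol_\ce(S_1)=\varprojlim_\ell\sol_\ce(S_1)_{\le\ell}$ and likewise for $S_2$, and that the transition maps commute with the truncation epimorphisms, so equality of the two inverse systems from index $\ell_0$ onwards yields equality of the inverse limits.
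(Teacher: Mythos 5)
Your proof is correct and follows essentially the same route as the paper, which simply observes that the truncated solution sets are well-fibered (for every $\ell$ in part~(1), for all large $\ell$ in part~(2)) and invokes Theorem~\ref{theorem_counting_subset}. The inverse-limit step you flag as the main obstacle --- that membership in $\sol_\ce(S_1)$ is testable on arbitrarily high truncations because each constraint of an algebraically restricted system involves only finitely many Taylor coefficients --- is precisely the detail the paper leaves implicit, and your justification of it is sound.
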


The sets $\sol_\ce(S_1)_{\le \ell}$ and $\sol_\ce(S_2)_{\le \ell}$ of formal power series solutions truncated at order $\ell$ are well-fibered under the conditions of \eqref{theorem_differential_counting_subsets_1} and well-fibered for high enough $\ell$ under the conditions of \eqref{theorem_differential_counting_subsets_2}.
Thus, this theorem is a corollary of Theorem~\ref{theorem_counting_subset}.

Remark~\ref{remark_counting_not_unique} indicates that a stronger version of this theorem is unlikely.
However, we can show that two sets are not equal in the differential case similar to Proposition~\ref{proposition_estimate_counting_polynomial}, by using the total order $<$ and the estimation $\prec$, both defined before Proposition~\ref{proposition_estimate_counting_polynomial}.

\begin{proposition}\label{proposition_counting_poly_estimate}
  Let $S_1,S_2$ be two algebraically restricted systems of differential equations with $\sol_\ce(S_1)\subseteq\sol_\ce(S_2)$ such that the counting sequences $c(S_1)$ and $c(S_2)$ exist.
  If there exist an $\ell\in\Z_{\ge0}$ and $q_1,q_2\in\Z[\infty]$ with $c(S_1)(\ell)\prec q_1\lneqq q_2\prec c(S_2)(\ell)$, then $\sol_\ce(S_1)\not=\sol_\ce(S_2)$.
\end{proposition}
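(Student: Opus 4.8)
The plan is to reduce this to the truncated setting and apply Proposition~\ref{proposition_estimate_counting_polynomial}. First I would fix an order $\ell\in\Z_{\ge0}$ and polynomials $q_1,q_2\in\Z[\infty]$ as in the hypothesis, so that $c(S_1)(\ell)\prec q_1\lneqq q_2\prec c(S_2)(\ell)$. Since $\sol_\ce(S_1)\subseteq\sol_\ce(S_2)$ and the truncation map $P_\ce^U\twoheadrightarrow P_\ce^U/P_{\ce,>\ell}^U$ is natural, we obtain an inclusion $\sol_\ce(S_1)_{\le\ell}\subseteq\sol_\ce(S_2)_{\le\ell}$ of subsets of the finite-dimensional affine space $P_\ce^U/P_{\ce,>\ell}^U\cong\C^N$ (with $N=m\binom{n+\ell}{n}$ under an orderly ranking). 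By Theorem~\ref{theorem_sigma_describe_solutions} each of these truncated solution sets is a countable disjoint union of solution sets of simple algebraic $\sigma$-systems in $\C[G]_{\le\ell}$, and by Definition~\ref{def_counting} the values $c(S_1)(\ell)$ and $c(S_2)(\ell)$ are, up to the ambiguity recorded in Remark~\ref{remark_counting_not_unique}, algebraic counting polynomials of $\sol_\ce(S_1)_{\le\ell}$ and $\sol_\ce(S_2)_{\le\ell}$ respectively.

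Next I would apply Proposition~\ref{proposition_estimate_counting_polynomial} directly to the inclusion $U:=\sol_\ce(S_1)_{\le\ell}\subseteq\sol_\ce(S_2)_{\le\ell}=:V$ inside $\C^N$, with $p_1:=c(S_1)(\ell)$ and $p_2:=c(S_2)(\ell)$. The hypothesis $p_1\prec q_1\lneqq q_2\prec p_2$ is precisely the hypothesis of that proposition, so we conclude $\sol_\ce(S_1)_{\le\ell}\neq\sol_\ce(S_2)_{\le\ell}$. Finally, the truncation maps are compatible, i.e.\ the square relating $\sol_\ce(S_i)$ to $\sol_\ce(S_i)_{\le\ell}$ commutes, so if $\sol_\ce(S_1)=\sol_\ce(S_2)$ then their images under the same truncation epimorphism would coincide, contradicting the previous line. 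Hence $\sol_\ce(S_1)\neq\sol_\ce(S_2)$.

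The one point that needs genuine care — the main obstacle — is the interplay with the non-uniqueness of algebraic counting polynomials flagged in Remark~\ref{remark_counting_not_unique}. The relation $\prec$ is defined in terms of a \emph{specific} counting polynomial $p_1(\aleph_0,\infty)$, and $c(S_i)(\ell)$ is, by Definition~\ref{def_counting}, \emph{some} counting polynomial attached to the chosen decomposition $C_\ell$. I need to make sure the statement is read correctly: the hypothesis is that there exist such $q_1,q_2$ for the given counting sequences, and then the conclusion follows regardless of which representatives were chosen, because Proposition~\ref{proposition_estimate_counting_polynomial} itself is stated at the level of arbitrary counting polynomials $p_1,p_2$ of $U$ and $V$. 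So no extra work is needed beyond invoking it; I would just remark explicitly that the argument does not depend on the choice of decomposition $C_\ell$ or of the counting polynomial within its ambiguity class, since any valid choice still satisfies $p_1\prec q_1$ and $q_2\prec p_2$ by hypothesis. The remaining steps — commutativity of truncation and the trivial contrapositive — are routine.
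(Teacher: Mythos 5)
Your proposal is correct and follows essentially the same route as the paper, which simply reduces to Proposition~\ref{proposition_estimate_counting_polynomial} applied to the inclusion of truncated solution sets $\sol_\ce(S_1)_{\le\ell}\subseteq\sol_\ce(S_2)_{\le\ell}$ at the given order $\ell$ (the same reduction by which Theorem~\ref{theorem_differential_counting_subsets} is derived from Theorem~\ref{theorem_counting_subset}). Your extra remark on the non-uniqueness of the algebraic counting polynomial is a sensible clarification but does not change the argument.
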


This proposition follows from Proposition~\ref{proposition_estimate_counting_polynomial} just as Theorem~\ref{theorem_differential_counting_subsets} follows from Theorem~\ref{theorem_counting_subset}.

\subsection{Comparison to the Differential Dimension Polynomial}

The counting sequence and the differential counting polynomial are connected to the differential dimension polynomial (cf.\ \cite{KolPolynomial,DiffDimPoly,DifferentialDimensionPolynomials,JohnsonPolynomial}) in the version defined for characterizable differential ideals (cf.\ \cite{BoundRosenfeldGroebner,Hubert2000}).

For the following theorem, we consider an $\ell$-th differential counting polynomial as a polynomial in the indeterminate $\infty$ and coefficients in $\Z[\aleph_0]$.
Similarly, we consider a differential counting polynomial as a polynomial in the indeterminates $\infty^{\frac{\ell^i}{i!}}$ for $0\le i\le n$ and coefficients in $\Q[\aleph_0,\ell]$.
We order the indeterminates $\infty^{\frac{\ell^n}{n!}}>\ldots>\infty^{\frac{\ell^2}{2!}}>\infty^\ell>\infty$.

\begin{theorem}\label{theorem_counting_and_dimension}
  Let $I:=\langle S\rangle_\Delta:(\ini(S)\cup\operatorname{sep}(S))^\infty$ be a characterizable differential ideal given by a regular chain $S$.
  Denote by
  \begin{align*}
    \Omega_I:\Z_{\ge0}\mapsto\Z_{\ge0}:\ell\mapsto\dim(F\{U\}_{\le \ell}/(I\cap F\{U\}_{\le \ell}))
  \end{align*}
  its differential dimension function and by $\omega_I(l)$ its dimension polynomial, the unique polynomial that agrees with the dimension function for all large enough $\ell$.
  If an $\ell$-th differential counting polynomial $c(I)(\ell)$ of $I$ exists, then its leading term is
  \begin{align*}
    \big(\prod_{\mathclap{\substack{p\in S\\\ord(p)\le \ell}}}\deg_{\ld(p)}p\big) \cdot \infty^{\Omega_I(\ell)}\mbox{ .}
  \intertext{%
  If a differential counting polynomial $\cd(I)$ of $I$ exits, then its leading term is
  }
    \big(\prod_{p\in S}\deg_{\ld(p)}p\big) \cdot \infty^{\omega_I(\ell)}\mbox{ .}
  \end{align*}
\end{theorem}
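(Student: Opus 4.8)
The plan is to reduce the assertion, via Theorems~\ref{theorem_sigma_describe_solutions} and~\ref{theorem_counting_well_defined}, to a computation of the dimension and the top-degree coefficient of the algebraic counting polynomial of the $\ell$-jet variety attached to $I$, and then to match these with the differential dimension function and with the product $\prod_{p}\deg_{\ld(p)}p$ using the triangular shape of the regular chain $S$. First I would fix $\ell\in\Z_{\ge0}$ and unwind the definitions: by Theorem~\ref{theorem_sigma_describe_solutions} there is a countable family $C_\ell$ of simple algebraic $\sigma$-systems in $\C[G]_{\le\ell}$ with $\sol_\ce(I)_{\le\ell}=\biguplus_{\widetilde S\in C_\ell}\sol_\ce(\widetilde S)_{\le\ell}$, so $c(I)(\ell)=c(C_\ell)$; here I use that $\sol_\ce(S)$ and $\sol_\ce(I)$ differ only inside the Rosenfeld-type singular locus on which some element of $\ini(S)\cup\operatorname{sep}(S)$ vanishes identically, and that this locus produces no top-dimensional piece after truncation. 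Viewing $\C[G]_{\le\ell}$ as the coordinate ring of the affine space of order-$\le\ell$ Taylor coefficients, $\sol_\ce(I)_{\le\ell}$ is the set of $\ell$-jets of formal power series solutions of $I$ about $\ce$, and I would identify its Zariski closure with $V(I\cap F\{U\}_{\le\ell})$, transported through $\forget$: the inclusion $\overline{\sol_\ce(I)_{\le\ell}}\subseteq V(I\cap F\{U\}_{\le\ell})$ is immediate, and the reverse inclusion holds because $I$ is characterizable, so formal power series solutions with arbitrarily prescribed parametric Taylor coefficients exist (formal Riquier/Cauchy--Kovalevskaya for a coherent regular chain) and are therefore Zariski dense in the jet scheme. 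In particular $\dim\overline{\sol_\ce(I)_{\le\ell}}=\Omega_I(\ell)$.

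Next I would isolate the leading term in $\infty$. Since $V(I\cap F\{U\}_{\le\ell})$ is Noetherian, only finitely many $\widetilde S\in C_\ell$ can have $\dim\overline{\sol_\ce(\widetilde S)_{\le\ell}}=\Omega_I(\ell)$ --- their closures together cover the finitely many top-dimensional components $Z$ of $\overline{\sol_\ce(I)_{\le\ell}}$ --- while the remaining pieces have strictly smaller dimension and so affect only lower-degree terms of $c(C_\ell)$ (and possibly its $\aleph_0$-part). By Theorem~\ref{theorem_counting_well_defined} the degree and leading coefficient of $c(\sol_\ce(\widetilde S)_{\le\ell})$ coincide with those of the unique counting polynomial $c(\overline{\sol_\ce(\widetilde S)_{\le\ell}})$, so the leading term of $c(I)(\ell)$, read as a polynomial in $\infty$, is $\bigl(\sum_Z\lambda(Z)\bigr)\cdot\infty^{\Omega_I(\ell)}$, where $\lambda(Z)$ denotes the leading coefficient of $c(Z)$. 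It then remains to prove $\sum_Z\lambda(Z)=\prod_{p\in S,\ \ord(p)\le\ell}\deg_{\ld(p)}p$.

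For this I would exploit the triangular structure of $S$. Each $p\in S$ with $\ord(p)\le\ell$ translates into equations on Taylor coefficients whose order-zero incarnation has, in the ranking inherited via $\forget$, leader $\forget(\ld(p))$, degree $\deg_{\ld(p)}p$, and non-vanishing initial $\forget(\ini(p))$, while every proper derivative $\partial^\nu p$ with $\nu\neq0$ and $\ord(p)+|\nu|\le\ell$ is linear in its leader with the non-vanishing coefficient $\forget(\operatorname{sep}(p))$ (compatibility of the several derived equations that can share a leader being guaranteed by coherence, via Rosenfeld's lemma). Hence over a dense open subset of the affine space of parametric order-$\le\ell$ coefficients --- those $g^{(j)}_\mu$, $|\mu|\le\ell$, with $u^{(j)}_\mu$ not a derivative of any $\ld(p)$, a space of dimension $\Omega_I(\ell)$ --- the fibre of the corresponding coordinate projection of $\overline{\sol_\ce(I)_{\le\ell}}$ is finite of constant cardinality $\prod_{p\in S,\ \ord(p)\le\ell}\deg_{\ld(p)}p$. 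Running the comprehensive Thomas decomposition variable by variable and applying Plesken's axioms~(1)--(4) along this projection, each leader-equation of degree $d$ multiplies the leading coefficient by $d$ and each leader-inequation or free parametric variable multiplies the leading term by $\infty$; summation over the branches yields $\sum_Z\lambda(Z)=\prod_{p\in S,\ \ord(p)\le\ell}\deg_{\ld(p)}p$, which is the first formula. For the second, assume $\cd(I)$ exists; then $c(I)(\ell)=\cd(I)$ for all large $\ell$, for $\ell\ge\max_{p\in S}\ord(p)$ the product ranges over all of $S$, and for $\ell$ large $\Omega_I(\ell)=\omega_I(\ell)$, so the leading term of $\cd(I)$ with respect to the ordering $\infty^{\ell^n/n!}>\dots>\infty^\ell>\infty$ is $\bigl(\prod_{p\in S}\deg_{\ld(p)}p\bigr)\cdot\infty^{\omega_I(\ell)}$.

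The main obstacle is everything in the third paragraph: that the $\ell$-jets of formal power series solutions of a characterizable ideal are Zariski dense in the variety of its jet ideal (needing the formal existence theorem for coherent regular chains), and --- more delicately --- that Plesken's counting polynomial of that variety has leading coefficient exactly $\prod_{p\in S,\ \ord(p)\le\ell}\deg_{\ld(p)}p$, i.e.\ that only the original members of $S$ contribute degree factors whereas their derivatives, being linear in their leaders, do not, and that the branching of the fibres over the parametric coordinates is recorded with the right multiplicity by the axioms. Controlling the countably many low-dimensional exceptional pieces of the Thomas decomposition so that they cannot accumulate in the top $\infty$-degree is a further point that must be checked, but it follows from Noetherianity of the jet scheme together with Theorem~\ref{theorem_counting_well_defined}.
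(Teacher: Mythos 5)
Your proposal is correct and follows essentially the same route as the paper: reduce to Theorem~\ref{theorem_counting_well_defined} so that degree and leading coefficient are read off the Zariski closure of the truncated solution set, identify the degree with $\Omega_I(\ell)$ via the definition of the differential dimension function, observe that proper derivatives are quasilinear and hence contribute no degree factors so the leading coefficient is $\prod_{p\in S,\ \ord(p)\le\ell}\deg_{\ld(p)}p$, and pass to $\omega_I$ for large $\ell$. The paper's own proof is only a short sketch of exactly these steps; your write-up supplies the density and finiteness details it leaves implicit.
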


We postpone the proof to page~\pageref{proof_theorem_counting_and_dimension}.

Under the assumptions of this theorem, the differential counting polynomial implies the same invariants of differential birational maps as the differential dimension polynomial.
In particular, the differential type, typical dimension, and differential dimension can be read off the exponent of $\infty$ in the leading term.
This exponent is a polynomial in $\ell$ equal to the differential dimension polynomial.
The differential type $t$ is the degree of this exponent, and when writing it as $\sum_{i=0}^na_i\binom{\ell+i}{i}$ the typical dimension is $a_t$ and the differential dimension is $a_n$ (cf.\ \cite[Theorem~1.1]{DiffDimPoly}).

\subsection{Simple Differential Systems without Inequations}

For semilinear systems of differential equations there exists a closed formula for the differential counting polynomial that holds once all differential consequences are obvious from the system (such systems are called passive \cite{Janet}, involutive \cite{thomasalg_jsc} or coherent \cite{Kol}).
It follows from this formula, that the differential counting polynomial of such systems does not involve $\aleph_0$.
This holds for the more general class of differential equations given by a simple differential system $S$ without inequations \cite[Def.~3.5]{thomasalg_jsc}.
Let $\mcI(S):=\langle T\rangle_\Delta:q^\infty$ be the corresponding characterizable differential ideal, where $T$ are the equations in $S$ and $q$ is the product of the initials and separants of $T$.
Let $\Omega_{\mcI(S)}$ denote its differential dimension function and $\omega_{\mcI(S)}$ its dimension polynomial \cite{DiffDimPoly}.

\begin{theorem}\label{theorem_without_inequations}
  Let $S=\{p_1,\ldots,p_s\}$ be a simple differential system in $F\{U\}$ without inequations.
  Consider formal power series solutions around a point $\ce\in\C^n$ such that neither evaluating the coefficients of $S$ at $\ce$ yields a pole nor any initial or separant vanishes identically.  
  Then, its unique counting sequence is
  \begin{align*}
    c(S):l\mapsto \Big(\prod_{{\substack{1\le i\le s\\ \ord(p_i)\le \ell}}}\deg_{\ld(p_i)}(p_i)\Big)\cdot\infty^{\Omega_{\mcI(S)}(\ell)}\mbox{ ,}
  \end{align*}
  and its differential counting polynomial is
  \begin{align*}
    \cd(S)=\Big(\prod_{{1\le i\le s}}\deg_{\ld(p_i)}(p_i)\Big)\cdot\infty^{\omega_{\mcI(S)}(\ell)}\mbox{ .}
  \end{align*}
\end{theorem}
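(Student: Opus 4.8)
The plan is to reduce everything to a fiberwise analysis of how the power series coefficients of solutions may be chosen, order by order, and then to invoke Theorem~\ref{theorem_counting_and_dimension} together with the product axiom of Definition~\ref{definition_counting}. Since $S$ is a simple differential system without inequations, it is in particular a regular chain, so the characterizable differential ideal $\mcI(S)=\langle T\rangle_\Delta:q^\infty$ of Theorem~\ref{theorem_counting_and_dimension} is exactly the one attached to $S$. First I would fix $\ell\in\Z_{\ge0}$ and use Theorem~\ref{theorem_sigma_describe_solutions} to write $\sol_\ce(S)_{\le\ell}$ as a disjoint union of solution sets of simple algebraic $\sigma$-systems in $\C[G]_{\le\ell}$; the point to establish is that, because $S$ has \emph{no} inequations and is passive (all differential consequences up to order $\ell$ are already visible after prolongation), this decomposition can be taken to consist of a \emph{single} simple algebraic system with no inequations — equivalently, the prolongation of $S$ to order $\ell$ is already a simple algebraic system on the coefficients $g^{(j)}_\mu$, $|\mu|\le\ell$. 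This is where coherence/passivity does the real work: it guarantees that prolonging and reducing produces no splitting and no inequations, so $\aleph_0$ never enters.

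Next I would compute the algebraic counting polynomial of that single simple algebraic system directly from the axioms. The coefficients $g^{(j)}_\mu$ with $|\mu|\le\ell$ that are \emph{not} leaders of any prolonged equation are free, each contributing a factor $\infty$ by axioms~\eqref{definition_counting_1_dim} and \eqref{definition_counting_product}; their number is precisely $\Omega_{\mcI(S)}(\ell)=\dim(F\{U\}_{\le\ell}/(\mcI(S)\cap F\{U\}_{\le\ell}))$ by the definition of the differential dimension function. Each coefficient $g^{(j)}_\mu$ that \emph{is} the leader of the prolongation of some $p_i$ (this happens exactly when $\ord(p_i)\le\ell$ and $\mu$ is obtained from $\ld(p_i)$ by a suitable derivative) is constrained by a square-free univariate polynomial of degree $\deg_{\ld(p_i)}(p_i)$ over the field generated by the lower coefficients — here non-vanishing initials and square-freeness of the simple system, inherited by the prolongation, ensure the fiber over the lower coefficients is a finite set of exactly $\deg_{\ld(p_i)}(p_i)$ points, contributing that integer factor by axiom~\eqref{definition_counting_singleton} and the product axiom~\eqref{definition_counting_product}. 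Multiplying these contributions gives $c(S)(\ell)=\big(\prod_{\ord(p_i)\le\ell}\deg_{\ld(p_i)}(p_i)\big)\cdot\infty^{\Omega_{\mcI(S)}(\ell)}$, which lies in $\Z[\infty]$, hence $\sol_\ce(S)_{\le\ell}$ is well-fibered; by Theorem~\ref{theorem_counting_subset} this counting polynomial is unique, so the counting sequence is unique and equals the stated formula.

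Finally, for the differential counting polynomial I would pass to large $\ell$: for $\ell\ge\max_i\ord(p_i)$ the product $\prod_{\ord(p_i)\le\ell}\deg_{\ld(p_i)}(p_i)$ stabilizes to $\prod_{1\le i\le s}\deg_{\ld(p_i)}(p_i)$, and $\Omega_{\mcI(S)}(\ell)$ agrees with the dimension polynomial $\omega_{\mcI(S)}(\ell)$ for all large enough $\ell$ by Kolchin's theorem \cite{DiffDimPoly}; since $\omega_{\mcI(S)}$ can be written in the binomial basis, $\infty^{\omega_{\mcI(S)}(\ell)}$ is a monomial in the generators $\infty,\infty^\ell,\infty^{\ell^2/2!},\ldots,\infty^{\ell^n/n!}$, so the expression $\big(\prod_i\deg_{\ld(p_i)}(p_i)\big)\cdot\infty^{\omega_{\mcI(S)}(\ell)}$ is a legitimate element of $\Q[\ell,\aleph_0,\infty,\infty^\ell,\ldots,\infty^{\ell^n/n!}]$ and agrees with $c(S)(\ell)$ for ultimately all $\ell$, so it is by definition \emph{the} differential counting polynomial; uniqueness follows from Theorem~\ref{theorem_differential_counting_subsets}\eqref{theorem_differential_counting_subsets_2}.

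The main obstacle I anticipate is the first step: rigorously showing that passivity of a simple differential system forces the order-$\ell$ prolongation to be a \emph{single} simple algebraic $\sigma$-system with no inequations on the coefficients. One must verify that every differential consequence of $S$ of order $\le\ell$ is, after the algebraic Thomas decomposition of the prolongation, already reduced to zero — so that no new leading algebraic equations, and in particular no inequations guarding initials, are introduced — and that the initials and separants, which are non-vanishing for $S$, remain non-vanishing after prolongation at the chosen center $\ce$ (this is exactly the hypothesis that no initial or separant vanishes identically, combined with $\ce$ not being a pole). Once this structural fact is in hand, the counting computation is a routine application of the product axiom and the well-definedness from Theorem~\ref{theorem_counting_subset}.
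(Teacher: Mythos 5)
Your overall strategy --- prolong $S$ to order $\ell$, show that the resulting algebraic system on the coefficients $g^{(j)}_\mu$ is a single simple algebraic system without inequations, and read off the counting polynomial as a product of degrees times $\infty^{\#\{\text{free coefficients}\}}$ --- is exactly the paper's. But two points need repair.

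First, your degree bookkeeping contradicts your own final formula. You assert that each coefficient $g^{(j)}_\mu$ which is the leader of a prolongation of $p_i$ ``is constrained by a square-free univariate polynomial of degree $\deg_{\ld(p_i)}(p_i)$''; if that held for every proper derivative of $p_i$, the product axiom would contribute a factor $\deg_{\ld(p_i)}(p_i)$ for \emph{each} of the many derivatives of $p_i$ of order $\le\ell$, not one factor per equation. The correct statement, which the paper uses explicitly, is that every proper derivative $\partial^\nu p_i$ with $\nu\neq 0$ is quasilinear --- degree one in its leader, with initial equal to $\operatorname{sep}(p_i)$ --- so only the undifferentiated $p_i$ with $\ord(p_i)\le\ell$ contributes the factor $\deg_{\ld(p_i)}(p_i)$. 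Quasilinearity is also what gives square-freeness of the derivatives, and the hypothesis on the separants is what gives their non-vanishing initials.

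Second, the step you yourself flag as ``the main obstacle'' is where the real work lies, and you leave it unresolved. Taking \emph{all} prolongations of order $\le\ell$ does not in general produce a weakly triangular system: distinct cross-derivatives of different equations can share a leader (e.g.\ $\partial_y^2p$ and $\partial_x^2q$ when $\ld(p)=u_{xx}$ and $\ld(q)=u_{yy}$), so the result would have two equations with the same leader and fail to be simple. The paper's fix is to take only the \emph{reductive} prolongations in the sense of the Janet division of \cite{thomasalg_jsc}, which assigns each derivative to at most one equation and hence yields a weakly triangular, in fact simple, algebraic system $S_{\le\ell}$; involutivity of $S$ then guarantees that the omitted non-reductive prolongations impose no further conditions, so that $\sol_\ce(S)_{\le\ell}=\sol_\ce(S_{\le\ell})_{\le\ell}$. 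With these two corrections your argument matches the paper's; your concluding steps (well-fiberedness, uniqueness via Theorem~\ref{theorem_differential_counting_subsets}, and stabilization of $\Omega_{\mcI(S)}$ to $\omega_{\mcI(S)}$ for large $\ell$) are fine.
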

We postpone the proof to page~\pageref{proof_theorem_without_inequations}.

Differential inequations in the sense of Thomas (cf.\ \cite{thomasalg_jsc}) are not well-suited to count power series solutions, as $u(x)\not=0$ just implies that at least one power series coefficient of $u$ is non-zero.

Many examples of systems of differential equations yield a Thomas decomposition into a single simple differential system without inequations.
Examples are systems of linear differential equations and semilinear formally integrable systems of differential equations.
We show an example of the latter class.

\begin{example}\label{example_navier_stokes_counting}
  Let $F=\C$, $\Delta=\{\partial_x,\partial_y, \partial_z, \partial_t \}$,  $U=\{u,v,w,p\}$, and fix a ranking, such that the leaders are the underlined indeterminates.
  The incompressible Navier-Stokes equations are
  \begin{align*}
    S:= \{ && u_t+uu_x+vu_y+wu_z+p_x-\left(\underline{u_{xx}}+u_{yy}+u_{zz}\right) &= 0,\\
           && v_t+uv_x+vv_y+wv_z+p_y-\left(\underline{v_{xx}}+v_{yy}+v_{zz}\right) &= 0,\\
           && w_t+uw_x+vw_y+ww_z+p_z-\left(\underline{w_{xx}}+w_{yy}+w_{zz}\right) &= 0,\\
           && \underline{u_x}+v_y+w_z &= 0 \quad \}.
  \end{align*}
  A differential Thomas decomposition for $S$ is given by the one system
  \begin{align*}S\cup\left\{\ 2u_yv_x+2u_zw_x+2v_zw_y+u_x^2+v_y^2+w_z^2+\underline{p_{xx}}+p_{yy}+p_{zz}=0 \right\}\mbox{ ,}\end{align*}
  where the Poisson pressure equation is added to $S$.
  In particular, the Thomas decomposition of $S$ does not contain any inequation.
  A combinatorial calculation shows that the differential counting polynomial of the incompressible Navier-Stokes equations is $\infty^{\ell^3+\frac{11}{2}\ell^2+\frac{17}{2}\ell+4}$.
\end{example}

\subsection{Examples}

To transform a differential equation into an equation for a single power series coefficient, we define the partial map $\forget:F\{U\}\to\C[G]$ as additive, multiplicative, mapping $u^{(j)}_{\mu}$ to $g^{(j)}_{\mu}$, and mapping any meromorphic $f\in F$ to $f(\ce)\in\C$ if it has no pole in $\ce$.\label{forget}

We call the following the \textbf{postponing} of a differential equation $p\in F\{U\}$:
Replace $p$ by its first derivatives $\{\partial_{x_1}p,\ldots,\partial_{x_n}p\}$ and $\forget(p)$; this does not change the solution set.

In the following example, there is a power series coefficient that can be chosen arbitrarily except for a countable infinite exceptional set for a solution to exist.
This exceptional set corresponds to the indeterminate $\aleph_0$ in the differential counting polynomial.
In particular, there exists a set of differential equations for which the indeterminate $\aleph_0$ appears in the differential counting polynomial.
This countable exceptional set carries over from the set of formal power series solutions to the set analytical solutions, as all formal power series solutions in this example have a positive radius of convergence.
The author did not find a similar set of differential equations which describes natural phenomena or appears in the scientific literature.

\begin{example}\label{example_hard}
  Let $U=\{u^{(1)},u^{(2)}\}$, $\Delta=\{\partial_t\}$, $F=\C(t)$, and $<$ the orderly ranking with $u^{(1)}>u^{(2)}$.
  We show the following.
  For all $\ell\ge1$
  \begin{align*}
    \cd(S)=c(S)(\ell)=\infty^3-\infty^2+\infty-\aleph_0
  \end{align*}
  for formal power series solutions of $S:=\{p:=u^{(2)}u^{(1)}_1-u^{(1)}+\frac{1}{t}=0, u^{(2)}_2=0\}$ centered around $\ce\in\C\setminus\{0\}$.
  Each of these solutions is locally convergent and $S$ has no solutions centered around $0$.

  Use the ansatz $u^{(1)}(t)=\sum_{i=0}^\infty g^{(1)}_i\frac{(t-\ce)^i}{i!}$ and $u^{(2)}(t)=\sum_{i=0}^\infty g^{(2)}_i\frac{(t-\ce)^i}{i!}$.
  Adding $g^{(2)}_0\not=0$ to $S$ yields $T:=\{p = 0, u^{(2)}_2 = 0, g^{(2)}_0 \not=0 \}$.
  It has $\ell$-th differential counting polynomial $c(T)(\ell)=\infty^3-\infty^2$ for every order $\ell\ge1$.
  This follows by means of the proof of Theorem~\ref{theorem_without_inequations} on page \pageref{proof_theorem_without_inequations}; the inequation $g^{(2)}_0 \not=0$ ensures that the initials of the derivatives of $p$ are non-zero after applying $\forget$.

  The system $S\cup \{g^{(2)}_0=0\}$, which is complementary to the previously treated system $S\cup \{g^{(2)}_0\not=0\}$, is equivalent to
  \begin{align*}
  S_1:=\{ && \partial_tp = u^{(2)}u^{(1)}_2+(u^{(2)}_1-1)u^{(1)}_1-\frac{1}{t^2}   &= 0, & u^{(2)}_2 &=0, \\
          && g^{(1)}_0-\frac{1}{\ce} &=0, & g^{(2)}_0 &=0 \} \mbox{ ,}
  \end{align*}
  by postponing $p$.
  This system $S_1$ belongs to the family 
  \begin{align*}
  S_k:=\{ q_k:=u^{(2)}u^{(1)}_{k+1}+(ku^{(2)}_1-1)u^{(1)}_k+(-1)^{k}\frac{k!}{t^{k+1}} &= 0, && u^{(2)}_2 = 0,\\
          (ig^{(2)}_1-1)g^{(1)}_i+(-1)^i\frac{i!}{\ce^{i+1}} &= 0                  && \forall\ 0\le i<k,\\
          {\textstyle\prod_{i=1}^{k-1}(ig^{(2)}_1-1)} & \not=0,            && g^{(2)}_0 =0 \}
  \end{align*}
  of systems.
  Here $q_k$ results from differential reduction of $\partial_t^kp$ by $u^{(2)}_2$. 
  After application of $\forget$ and reduction with elements in $S_k$, the differential equations $q_k$ yield $(kg^{(2)}_1-1)g^{(1)}_k+(-1)^{k}\frac{k!}{\ce^{k+1}}$.
  To ensure a non-zero initial, we add $(kg^{(2)}_1-1)\not=0$ to $S_k$.
  Then, postponing $q_k$, which after reduction by $u^{(2)}_2$ results in $q_{k+1}$, yields the system $S_{k+1}$.
  Complementary, when adding $kg^{(2)}_1-1=0$ to $S_k$, the system is inconsistent, since reducing $\partial_tq_k$ by $u^{(2)}_2$ results in $q_{k+1}$.
  Then
  \begin{align*}
    \forget(q_{k+1})=g^{(2)}_0g^{(1)}_{k+2}+(kg^{(2)}_1-1)g^{(1)}_{k+1}+(-1)^{k+1}\frac{(k+1)!}{\ce^{k+2}}=0
  \end{align*}
  yields the contradiction $(-1)^{k+1}\frac{(k+1)!}{\ce^{k+2}}=0$ by using the relations $g^{(2)}_0=0$ and $kg^{(2)}_1-1=0$.

  Study the remaining system $S_\infty:=\bigcup_{i=1}^\infty S_i$.
  The equations $(kg^{(2)}_1-1)g^{(1)}_k+(-1)^k\frac{k!}{\ce^{k+1}}=0$ make $p$ superfluous.
  Furthermore, $g^{(2)}_1$ cannot equal $\frac{1}{k}$ for any $k\in\Z_{\ge1}$.
  Thus, there exists a countable infinite set of exceptional values for the power series coefficient $g^{(2)}_1$ for which no solution exists.
  This results in
  \begin{align*}
    T_\infty:= \{ && u^{(2)}_2 =0, \quad g^{(2)}_0 &= 0,                    &&                         && \\
                  && (kg^{(2)}_1-1)g^{(1)}_k+(-1)^k\frac{k!}{\ce^{k+1}} &=0   && \forall\ k\in\Z_{\ge0}, && \\
                  && kg^{(2)}_1-1 &\not=0                                   && \forall\ k\in\Z_{\ge1}  && \}\mbox{ .} \\
  \end{align*}

  Hence, $\sol_\ce(S)=\sol_\ce(T)\uplus \sol_\ce(T_\infty)$.

  For order $\ell=0$ this system has one solution $\{g^{(1)}_0=\frac{1}{\ce}, g^{(2)}_0=0\}$ and thus its differential counting polynomial is $1$.
  Its solution set is disjoint with that of $T$, which has differential counting polynomial $\infty^2-\infty$.
  Thus, the zeroth differential counting polynomial is $\infty^2-\infty+1$ for $\ell=0$.
  Now assume $\ell\ge1$.
  The only choice in the special case system $T_\infty$ is for $g^{(2)}_1$ and it may be chosen freely in $\C\setminus\left\{\frac{1}{k}\middle|k\in\Z_{\ge1}\right\}$.
  Thus, $c(T_\infty)=\infty-\aleph_0$.
  This implies that the counting sequence of $S$ is 
  \begin{align*}
    c(S)=l\mapsto
    \begin{cases}
      \infty^3-\infty^2+\infty-\aleph_0, & \ell\ge1\\
      \infty^2-\infty+1, & \ell=0\mbox{ .}
    \end{cases}
  \end{align*}
  These exceptional values for $g^{(2)}_1$ correspond to the indeterminate $\aleph_0$ in the differential counting polynomial.
  
  %
  %
  %

  All formal power series solutions of this example converge.
  This is implied for the ones of $T$ by Riquier's Existence Theorem \cite{Riquier}.
  For system $T_\infty$ the solutions of $u^{(2)}$ are lines and the radius of convergence for the formal power series solutions of $u^{(1)}$ is $|\ce|$ by the ratio test:
  \belowdisplayskip=-1em
  \begin{align*}
    \left|\frac{g^{(1)}_{k+1}}{(k+1)g^{(1)}_k}\right|=\left|\frac{kg^{(2)}_1-1}{(k+1)g^{(2)}_1-1}\right|\cdot\left|\frac{1}{\ce}\right|\longrightarrow\left|\frac{1}{\ce}\right|, k\to\infty
  \end{align*}
\end{example}

The following example demonstrates that the additional information contained in the differential counting polynomial can be used to decide that a symbolic solver of differential equations did not find all solutions.

\begin{example}\label{example_better_dim_poly}
  Let $U=\{u^{(1)},u^{(2)}\}$, $\Delta=\{\partial_t\}$, $F=\C$, and $<$ the orderly ranking with $u^{(1)}>u^{(2)}$.
  We show the following.
  For all $\ell\ge1$
  \begin{align*}
    \cd(S)=c(S)(\ell)=\infty^{\ell+2}-\infty^{\ell+1}+(\ell+1)\infty^\ell-\ell\infty^{\ell-1}
  \end{align*}
  for formal power series solutions of $S:=\{p:=u^{(2)}u^{(1)}_1-u^{(1)}=0\}$ centered around zero.
  The dimension polynomial is $\ell+2$ (using Theorem~\ref{theorem_counting_and_dimension} and the Low Power Theorem \cite[IV.\S15]{Kol}).
    
  Maple's \textsf{dsolve} \cite{maple17} returns an arbitrary $u^{(2)}(t)$ and
  \begin{align*}
    u^{(1)}(t)=a\cdot e^{\int_0^t\frac{1}{u^{(2)}(h)}\mathrm{d}h}
  \end{align*}
  for a constant $a$.
  This set of solutions depends on $\ell+2$ generically arbitrary constants up to order $\ell$, in  accordance with the dimension polynomial.
  The zeroth power series coefficient of $u^{(2)}(t)$ cannot be zero, as otherwise the integral does not exist.
  Thus, Maple's \textsf{dsolve} finds $\infty^{\ell+2}-\infty^{\ell+1}$ solutions up to order $\ell$ and a subset of the solutions with $\ell$-th counting polynomial $(\ell+1)\infty^\ell-\ell\infty^{\ell-1}$ is not found.
  The dimension polynomial does not account for these additional solutions, some of which are analytic.
  
  Now we show the claims from above.
  Use the ansatz $u^{(1)}(t)=\sum_{i=0}^\infty a_i\frac{t^i}{i!}$ and $u^{(2)}(t)=\sum_{i=0}^\infty b_i\frac{t^i}{i!}$.
  Let
  \begin{align*}
    \forget: \C\{U\}\to \C[a_i,b_i|i\in\Z_{\ge0}]: u^{(1)}_i\mapsto a_i, u^{(2)}_i\mapsto b_i\mbox{ .}
  \end{align*}
  
  Adding $b_0\not=0$ to $S$ yields $T:=\{p = 0, b_0 \not=0\}$ with $\ell$-th differential counting polynomial $c(T)(\ell)=(\infty-1)\infty^{\ell+1}$ for every order $\ell\ge1$.
  
  Complementary, the system $\{p=0, b_0=0\}$ is equivalent to $S_1:=\{ \partial_tp=0, a_0=0, b_0=0 \}$ by postponing $p$.
  It is part of the family
  \begin{align*}
    S_k:=\{ && \partial_t^kp = u^{(2)}u^{(1)}_{k+1}+(ku^{(2)}_1-1)u^{(1)}_k+{\textstyle\sum_{i=2}^k\binom{k}{i}u^{(2)}_iu^{(1)}_{k+1-i}} &= 0, \\
                              && a_0 = \ldots = a_{k-1} = b_0 &=0, \\
                              && {\textstyle\prod_{i=1}^{k-1}(ib_1-1)} & \not=0 & \}
  \end{align*}
  of systems.
  Consider the initial of the equation $\forget(\partial_t^kp)$, which is equal to $(kb_1-1)a_k$ after reduction in $S_k$.
  Adding its initial $(kb_1-1)\not=0$ to $S_k$, and postponing $\partial_t^kp$ results in the system $S_{k+1}$.
  Complementary, adding $(kb_1-1)=0$ to $S_k$ and postponing $\partial_t^kp$ yields the system
  \begin{align*}
  T_k:=\{ && \partial_t^{k+1}p &= 0, \\
          && a_0 = \ldots = a_{k-1} &=0, \\
          && b_0 = kb_1-1 &=0 \}.
  \end{align*}
  The inequations from $S_k$ are superfluous in $T_k$ because of the equation $kb_1-1=0$.
  The equation $\forget(\partial_t^{k+1+j}p)$ reduces to $\frac{1}{k}a_{k+1+j}+\binom{k+1+j}{2}b_2a_{k+j}$ in the context of $T_k$ for all $j\in\Z_{\ge0}$.
  This reduced form has the leader $a_{k+1+j}$ for all $j\in\Z_{\ge0}$; there is no constraint for $a_k$.

  Consider the remaining system $T_\infty:=\bigcup_{i=1}^\infty S_i$.
  The equations $a_k=0$ for all $k\ge0$ combine to the differential equation $u^{(1)}=0$; this makes the differential equation $p$ superfluous.  
  Furthermore, $b_1$ is not allowed to be of the form $\frac{1}{i}$ for any $i\in\Z_{\ge1}$.
  Summing up, the system $T_\infty:=\{u^{(1)}=0,b_0=0,ib_1\not=1\ \forall i\in\Z_{\ge1}\}$ describes these remaining solutions.

  We discuss the $\ell$-th differential counting polynomials for $\ell\ge1$ of the sets of solutions of $T_\infty$ and $T_k$, $k\ge1$.
  These systems have disjoint sets of solutions in orders $\ell\ge1$, since $b_1$ takes different values.
  The $\ell$-th differential counting polynomial of $T_k$ for $k\le\ell$ is $\infty^\ell$, since the values for the indeterminates $a_k,b_2,\ldots,b_\ell$ are freely chooseable and the other values are fixed.
  In the union $\biguplus_{k>\ell,k=\infty}\sol_0(T_k)_{\le\ell}$ the value for $b_1$ can be freely chosen except for the $\ell$ values $\frac{1}{1},\ldots,\frac{1}{\ell}$.
  Then, the indeterminates $b_2,\ldots,b_\ell$ have no constraint and the indeterminates $a_i$ are uniquely determined.
  Thus,
  
  \begin{align*}
    c\Big(\biguplus_{k>0,k=\infty}\sol_0(T_k)_{\le\ell}\Big)
    &=\sum_{\mathclap{1\le k\le\ell}}c\big(\sol_0(T_k)_{\le\ell}\big)
    +c\Big(\biguplus_{k>\ell,k=\infty}\sol_0(T_k)_{\le\ell}\Big).\\
    &=\ell\cdot \infty^\ell
    +(\infty-\ell)\cdot\infty^{\ell-1}\\
    &=(\ell+1)\infty^\ell-\ell\cdot\infty^{\ell-1}
  \end{align*}
  Adding this $\ell$-th counting polynomial to the one of $T$ results in $\infty^{\ell+2}-\infty^{\ell+1}+(\ell+1)\infty^\ell-\ell\infty^{\ell-1}$, as claimed above.

  Riquier's Existence Theorem \cite{Riquier} implies the convergence for the power series solutions of system $T$ for analytical initial conditions.
  System $T_\infty$ gives the zero power series for $u^{(1)}$, which converges and only restricts the choice for the first two power series coefficients of $u^{(2)}$, hence $u^{(2)}$ can be chosen to converge.
  The solutions of the systems $T_k$ can diverge even for analytical initial conditions.
  E.g., consider system $T_1$ and prescribe $b_0=0, b_1=1, b_2=1, b_i=0$ for all $i\ge3$.
  By the ratio test the radius of convergence of the solution for $u^{(1)}$ is zero, as 
  \begin{align*}
    \left|\frac{a_{k+1}}{(k+1)a_k}\right|
    =\frac{k-1}{k}\left|\frac{\sum_{i=2}^k\binom{k}{i}\frac{b_{i+1}}{i+1}a_{k+1-i}}{\sum_{i=2}^k\binom{k}{i}b_ia_{k+1-i}} +\frac{k}{2}b_2\right|
    =\frac{k-1}{2}
    \longrightarrow\infty
  \end{align*}
  for  $k\to\infty$.
  The analytical initial condition $b_0=0, b_1=1, b_2=0, b_i=i!$, for $i\ge3$, gives $1$ as radius of convergence by a similar computation.
\end{example}

\section{Proofs}\label{sect_proofs}

This section proves the theorems of the previous sections.

\subsection{Proof of Theorem~\ref{theorem_counting_well_defined}}\label{proof_theorem_counting_well_defined}

By abuse of notation, let $c$ denote the counting polynomials for subsets in $\C^k$ for all $1\le k\le n$.

\begin{proof}[Proof of existence]\label{proof_theorem_counting_well_defined_existence}
  Let $S\subset \C[y_1,\ldots,y_n]$ be a simple algebraic $\sigma$-sys\-tem.
  Furthermore, let $\tau(S_{y_i})$ be the degree of the equation if $S_{y_i}$ is a singleton of an equation, $\tau(S')=\prod_{p\in S'}\deg_{y_i}(p)$ if $S_{y_i}$ is a finite set of inequations, and $\tau(S_{y_i})=\infty-\aleph_0$ if $S_{y_i}$ is a countably infinite set of inequations.
  Then, the product $\prod_{i=1}^n \tau(S_{y_i})$ is a counting polynomial.
  The correctness of this formula follows from the fibration structure of simple systems as discussed in \cite{PleskenCounting}, which also holds for simple $\sigma$-systems.
\end{proof}

Write $T:=\sol(S)$ and $\overline{T}$ for its Zariski closure.
Let $\pi:\C^n\to \C^{n-1}$ be the projection to the first $n-1$ components.
The projected set $\pi(T)$ is equal to the solution set of the simple $\sigma$-system $S_{<y_n}$ in $\C[y_1,\ldots,y_{n-1}]$.

\begin{proof}[Proof of uniqueness]
  For systems (instead of $\sigma$-systems), the claim is shown in \cite[Prop.~3.3]{PleskenCounting}; in this case, Lazard's Lemma implies that the degree of the algebraic counting polynomial is equal to the dimension of the set of solutions.

  In this proof we can ignore sets of lower dimension, since any counting polynomial of such a set is of lower degree and we can proceed by an induction on $\dim(T)$.
  Any partition of $T$ into solution sets of algebraic $\sigma$-systems of the same dimension is finite.
  Such a finite partition does not change the degree and leading coefficient of the counting polynomial, by the same arguments as in step 3 of the proof of \cite[Prop.~3.3]{PleskenCounting}.
  Thus, in the following we can always assume that a set is suitably partitioned into a disjoint union of sets.
  
  The claim is clear for $n=1$.
  We show the claim for the dimension $n$ of the surrounding space under the assumption that it is shown for dimensions $0$ up to $n-1$.
  The crux of the proof is that only axiom \eqref{definition_counting_product} in Definition~\ref{definition_counting} allows to increase this dimension.

  By the assumption on $n-1$, the algebraic counting polynomials  of the two projections $\pi(T)\subseteq \pi(\overline{T})$ have the same degree, say $d$, and leading coefficients, say $a$, as their Zariski closures coincide.
  
  As first case consider that $S_{y_n}$ is a set of inequations.
  By Definition~\ref{definition_counting}.\eqref{definition_counting_product}, any algebraic counting polynomial of $T$ is an algebraic counting polynomial of $\pi(T)$ multiplied by $\infty-b$ for $b\in\Z[\aleph_0]$.
  In particular, any leading coefficient is $a$ and any degree is $d+1$.
  Furthermore, $\overline{T}=\pi(\overline{T})\times\C$ has a unique counting polynomial, which is $c(\pi(\overline{T}))\cdot\infty$ and also has leading coefficient $a$ and degree $d+1$.

  As second case consider that $S_{y_n}$ is an equation.
  In this case we do an induction over $\dim(T)$.
  The claim is clear for $\dim(T)=0$, so assume that it is shown for all dimensions from $0$ to $\dim(T)-1$.

  On the one hand, by Definition~\ref{definition_counting}.\eqref{definition_counting_product}, the degree of any counting polynomial of $T$ is again $d$ and any leading coefficient is $a\cdot\deg_{y_n}(S_{y_n})$.

  On the other hand, consider $\overline{T}$.
  The map $\pi$ makes $\overline{T}$ an $\deg_{y_n}(S_{y_n})$-sheeted cover of $\pi(\overline{T})$.
  Denote by $R\subseteq\pi(\overline{T})$ the corresponding set of ramification points and by $U:=\pi(\overline{T})\setminus R$ the set of unramified points.
  To apply Definition~\ref{definition_counting}.\eqref{definition_counting_product}, one needs to partition $\overline{T}$ into (a refinement of) $\pi^{-1}(R)$ and $\pi^{-1}(U)$, thus any algebraic counting polynomial needs to be defined using this partition.
  As $U$ and $R$ are locally closed, their algebraic counting polynomials exist and are unique.
  The Zariski closures of $U$ and $\pi(\overline{T})$ coincide, so by induction on $n$ the leading coefficient of $c(U)$ is $a$ and $\deg_\infty(c(U))=d$.
  By Definition~\ref{definition_counting}.\eqref{definition_counting_product}, the algebraic counting polynomial of $\pi^{-1}(U)$ has the same degree and leading coefficient as that one of $T$, as $\pi^{-1}(U)$ is an unramified $\deg_{y_n}(S_{y_n})$-sheeted cover of $U$ and the set $\pi^{-1}(R)$ is of lower dimension than $\overline{T}$.
\end{proof}

\subsection{Proof of Theorem~\ref{theorem_counting_subset}}\label{subsection_proof_theorem_counting_subset}

The proof of Theorem~\ref{theorem_counting_polynomials} is given in \cite[Cor.~3.4]{PleskenCounting}.
The following two lemmas directly generalize this proof to showing Theorem~\ref{theorem_counting_subset}.
We call a set $W\subset\C^n$ \textbf{elementarily well-fibered} if either $n=1$ and $W$ is constructible or $n>1$, $\pi(W)\subseteq\C^{n-1}$ is elementarily well-fibered, and all fibers of $\pi^{-1}(\{w\})$ for $w\in\pi(W)$ are constructible with equal algebraic counting polynomials.
They admit an algebraic counting polynomial in $\Z[\infty]$ by definition.

\begin{lemma}\label{lemma_well_fibred_fibres}
  Let $V$ be a well-fibered set.
  Then there exists a finite partition $V=\biguplus_{i=1}^kW_i$ of $V$ into elementarily well-fibered sets $W_i$.
\end{lemma}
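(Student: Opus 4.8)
The plan is to prove Lemma~\ref{lemma_well_fibred_fibres} by induction on the ambient dimension $n$, peeling off the last coordinate via the projection $\pi:\C^n\to\C^{n-1}$ and using the fact that a well-fibered set has, by definition, an algebraic counting polynomial in $\Z[\infty]$, i.e.\ one obtained by a terminating application of the five axioms that never introduces $\aleph_0$. The base case $n=1$ is immediate: a subset $W\subseteq\C^1$ with a counting polynomial in $\Z[\infty]$ must, by axioms~\eqref{definition_counting_singleton}, \eqref{definition_counting_1_dim}, \eqref{definition_counting_sum} and the fact that axiom~(5) is the only way to produce a cofinite-but-infinite set, be constructible — a genuinely countably infinite complement would force an $\aleph_0$ — so $W$ itself is elementarily well-fibered and the trivial partition works.

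For the induction step I would trace through the derivation that witnesses $c(V)\in\Z[\infty]$. The key observation is that only axiom~\eqref{definition_counting_product} raises the degree in $\infty$, and when it is applied along the coordinate $y_n$ it exhibits $V$ (up to a partition already baked into the derivation) as a union of pieces, on each of which $\pi$ has fibers of constant counting polynomial. Concretely, I would argue that from the derivation of $c(V)$ one extracts a finite partition $V=\biguplus_{i=1}^k V_i$ such that for each $i$ the projection $\pi(V_i)\subseteq\C^{n-1}$ is well-fibered (its counting polynomial, read off the sub-derivation, still lies in $\Z[\infty]$ because no $\aleph_0$ was used) and all fibers of $\pi$ over $\pi(V_i)$ are constructible with equal counting polynomial. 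Applying the induction hypothesis to each $\pi(V_i)$ yields a finite partition into elementarily well-fibered sets $W_{i,j}\subseteq\C^{n-1}$; pulling these back, $V_i\cap\pi^{-1}(W_{i,j})$ has elementarily well-fibered base and constant constructible fibers, hence is elementarily well-fibered, and $V=\biguplus_{i,j}\bigl(V_i\cap\pi^{-1}(W_{i,j})\bigr)$ is the desired finite partition.

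The main obstacle — and the step deserving the most care — is the extraction of the partition and of the fiberwise-constructibility from an \emph{arbitrary} axiom-derivation of $c(V)$, since the axioms may be applied in any order and along any coordinate, not necessarily $y_n$ last. I would handle this by first invoking the order-independence footnoted after Definition~\ref{definition_counting} (and the analogous remark for the five-axiom version, or an induction on the length of the derivation) to normalize the derivation so that the decomposition with respect to $y_n$ is performed outermost: write $V=\biguplus_i V_i$ where each $V_i$ is a fiber-bundle-like piece handled by a single application of axiom~\eqref{definition_counting_product} over $y_n$ with constant fiber, plus possibly lower-dimensional leftover pieces on which $\pi$ restricts to something whose counting polynomial is still in $\Z[\infty]$ by the same token. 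A subtle point to check is that the fibers here are genuinely \emph{constructible}, not merely of finite counting polynomial: but a fiber sits inside $\C^1$ after fixing $y_1,\dots,y_{n-1}$, so this reduces to the $n=1$ analysis above, which forces constructibility whenever $\aleph_0$ is absent. With that normalization in hand the induction closes cleanly, so I expect the write-up to be short once the order-independence reduction is stated carefully.
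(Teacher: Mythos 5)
Your proposal follows essentially the same route as the paper's (much terser) proof: induction on $n$, the observation that only axiom~\eqref{definition_counting_product} raises the dimension and that axiom~\eqref{definition_counting_sum} forces any intermediate partition to be finite, and the identification of the pieces on which axiom~\eqref{definition_counting_product} applies directly as the elementarily well-fibered ones; your base case and the pullback step closing the induction are correct and in fact more carefully spelled out than in the paper. One caution on your normalization step: you cannot invoke ``the analogous remark for the five-axiom version'' of the order-independence footnote, because uniqueness of the counting polynomial fails for the five-axiom system in general (Remark~\ref{remark_counting_not_unique}), and for well-fibered sets it is precisely Lemma~\ref{lemma_well_fibred_unique}, which the paper deduces \emph{from} the present lemma --- so that route is circular. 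Your fallback, a structural induction on the given derivation (rearranging a single witnessing derivation rather than appealing to uniqueness across all derivations), is the sound way to handle applications of axiom~\eqref{definition_counting_product} along a coordinate $i<n-1$, and with that choice made explicit the argument goes through.
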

\begin{proof}
  The claim clearly holds for $n=1$.
  The only one of the five axioms for the algebraic counting polynomial that allows one to increase the dimension is axiom \eqref{definition_counting_product}.
  In general, one needs to partition $V$ before applying axiom \eqref{definition_counting_product}, but this partition needs to be finite, as otherwise axiom \eqref{definition_counting_sum} is not applicable to recombine the resulting algebraic counting polynomials.
  Elementarily well-fibered sets are exactly the sets for which axiom \eqref{definition_counting_product} is applicable without additional partitioning.
\end{proof}

\begin{lemma}\label{lemma_well_fibred_unique}
  Let $V$ be a well-fibered set.
  Then the algebraic counting polynomial of $V$ is unique.
\end{lemma}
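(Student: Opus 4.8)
I would prove uniqueness by induction on the ambient dimension $n$, mirroring step 3 of the proof of \cite[Prop.~3.3]{PleskenCounting} but tracking the $\aleph_0$-part carefully. The base case $n=1$ is the key place where well-fiberedness is used: a subset $W\subseteq\C^1$ admits an algebraic counting polynomial in $\Z[\infty]$ only if its complement is finite (axioms (1)–(3) give finite sets, axiom (2) gives all of $\C^1$, and axiom (5) would introduce $\aleph_0$), and for such $W$ the value is forced to be $\infty-|\C^1\setminus W|$, which is well-defined since $|\C^1\setminus W|$ is an intrinsic invariant of $W$. So in dimension $1$ the counting polynomial of a well-fibered set is unique.

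For the inductive step, suppose the claim holds for dimensions $0,\ldots,n-1$ and let $V\subseteq\C^n$ be well-fibered. By Lemma~\ref{lemma_well_fibred_fibres} write $V=\biguplus_{i=1}^k W_i$ with each $W_i$ elementarily well-fibered; any algebraic counting polynomial of $V$ is, by axiom \eqref{definition_counting_sum}, a sum of algebraic counting polynomials of the $W_i$, so it suffices to show each $c(W_i)$ is unique. For an elementarily well-fibered $W=W_i$, the projection $\pi(W)\subseteq\C^{n-1}$ is again elementarily well-fibered, hence (being well-fibered) has a unique counting polynomial by the inductive hypothesis, and every fiber $\pi^{-1}(\{w\})$ is constructible with the same algebraic counting polynomial $f\in\Z[\infty,\aleph_0]$; since $W$ is well-fibered, $c(W)=f\cdot c(\pi(W))$ must lie in $\Z[\infty]$, forcing $f\in\Z[\infty]$ as well (the leading-coefficient/degree rigidity of Theorem~\ref{theorem_counting_well_defined} or a direct argument shows the $\aleph_0$-part cannot cancel against $c(\pi(W))\in\Z[\infty]$). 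Now the fiber $\pi^{-1}(\{w\})$ is a constructible subset of $\C^1$ with a counting polynomial in $\Z[\infty]$, i.e.\ cofinite, so $f$ is forced by the $n=1$ case; and $c(\pi(W))$ is forced by induction. Hence by axiom \eqref{definition_counting_product} any algebraic counting polynomial of $W$ equals $f\cdot c(\pi(W))$, which is independent of all choices.

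The one subtlety I would spell out — and the main obstacle — is ruling out that a genuinely different application of the axioms to $V$ (one that transiently produces $\aleph_0$ yet ends in $\Z[\infty]$) could give a different answer. This is handled by the degree-and-leading-coefficient invariance of Theorem~\ref{theorem_counting_well_defined} together with the observation that, once the final polynomial lies in $\Z[\infty]$, the constant term is also pinned down: specializing $\aleph_0\mapsto 0$ is compatible with axioms (1)–(4), and any valid computation of a well-fibered set must, after this specialization, agree with the (unique) constructible-set counting polynomial of an associated cofinite set. Concretely, one compares the given computation of $c(V)$ with the canonical one $\sum_i f_i\cdot c(\pi(W_i))$ built above, term by term along the recursion tree; wherever the two trees branch differently, finiteness of every partition (as in Lemma~\ref{lemma_well_fibred_fibres}) lets one refine to a common partition, and on each common piece the inductive hypothesis applies. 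I would present this as a short paragraph rather than a full tree-surgery argument, citing the analogous bookkeeping in step 3 of \cite[Prop.~3.3]{PleskenCounting}.
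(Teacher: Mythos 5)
Your proposal is correct and follows essentially the same route as the paper: the paper's proof is precisely the observation that the uniqueness argument of \cite[Prop.~3.3]{PleskenCounting} carries over once one replaces partitions into solution sets of simple systems by the partition into elementarily well-fibered sets supplied by Lemma~\ref{lemma_well_fibred_fibres}, which is the skeleton of your induction on $n$ (base case in $\C^1$, inductive step on $\pi(W_i)$ and the fibers, and deferral of the tree-comparison bookkeeping to Plesken). You merely unpack in more detail what the paper states in two sentences, so no further comparison is needed.
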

\begin{proof}
  The proof of \cite[Prop.~3.3]{PleskenCounting} regarding the uniqueness of algebraic counting polynomials holds for well-fibered sets.
  One only needs to replace a partition into solution sets of simple systems with a partition into elementarily well-fibered sets, which exists by Lemma~\ref{lemma_well_fibred_fibres}.
\end{proof}

\subsection{Proof of Theorem~\ref{theorem_sigma_describe_solutions}}\label{theorem_sigma_describe_solutions_proof}
  
  Transform the algebraically restricted system of differential equations $S$ by keeping all equations and inequations in $\C[G]$ and apply $\forget$ (cf.\ page~\pageref{forget}) to the differential equations and all their (iterated) derivatives.
  Call the resulting set $Q$; it consists of infinitely many equations and inequations in $\C[G]$ and has the same set of solutions as $S$.
  Write $G=\{\overline{g_1},\overline{g_2},\ldots\}$ ordered by the ranking, i.e., $\overline{g_i}<\overline{g_{i+1}}$ for all $i$.
  Note that $Q\cap \C[\overline{g_1},\ldots,\overline{g_i}]$ is finite for all $i\in\Z_{>0}$. 
  
  Let $\overline{g_j}$ be the largest element in $G$ of order $\ell$.
  Define the set $L_0:=\textsf{Decompose}(Q\cap \C[\overline{g_1},\ldots,\overline{g_j}])$ of simple systems, where \textsf{Decompose} is the Thomas decomposition algorithm from \cite{thomasalg_jsc}.
  Iteratively, define the sets $L_k$ of simple systems by making
  \begin{align*}
    \left\{\textsf{Decompose}(T\cup  (Q\setminus \C[\overline{g_1},\ldots,\overline{g_{j+k-1}}])\cap \C[\overline{g_1},\ldots,\overline{g_{j+k}}])\;\middle|\;T\in L_{k-1}\right\}
  \end{align*}
  comprehensive (cf.\ section~\ref{sect_simple_systems}) with respect to $\overline{g_j}$ for each $k\in\Z_{>0}$.
  Let $L_k':=\{T\cap \C[\overline{g_1},\ldots,\overline{g_j}]\mid T\in L_k\}$ for each $k\in\Z_{\ge0}$.
  The simple systems in $L_k'$ have disjoint sets of solutions, as $L_k$ is comprehensive w.r.t.\ $\overline{g_j}$.
  
  For any power series which is not a solution of the input system $S$ there exists a $k$ large enough such that it is no longer the solution of any system in $L_k$, as each constraint in $Q$ is taken into account at some step.
  
  Next we show that equations stabilize by looking at ideals.
  Let $J_k:=\bigcap_{T\in L_k'}\mcI(T)$ be an ideal in the Noetherian ring $\C[G]_{\le\ell}$ for each $k\in\Z_{\ge0}$.
  This ideal is equal to intersecting $\C[G]_{\le\ell}$ with the vanishing ideal of $Q\cap \C[\overline{g_1},\ldots,\overline{g_{j+k}}]$.
  In particular, this ascending chain of ideals does stabilize after a finite number $k'$ of steps.
  This stable ideal is the vanishing ideal of all power series solutions truncated at order $\ell$.
  
  By construction for each simple system $T_{k+1}\in L_{k+1}'$ there is a unique simple system $T_k\in L_k'$ with $\sol(T_{k+1})\subseteq \sol(T_k)$; if additionally $\mcI(T_{k+1})=\mcI(T_k)$, then call $T_{k+1}$ the (unique) heir of $T_k$.
  Define a successor as an element in the transitive hull of heir.
  
  As $J_{k'}$ is a radical ideal in the Noetherian ring $\C[G]_{\le\ell}$, it has a finite prime decomposition.
  There is a minimal $L_k''\subseteq L'_{k}$ such that $J_{k'}=\bigcap_{T\in L_k''}\mcI(T)$ for each $k\ge k'$.
  By increasing $k'$ we may assume that the cardinality of each $L_k''$ is equal for all $k\ge k'$.
  In particular, each $T_k\in L_k''$ has an heir in $L_{k+1}''$.
  
  A closer look at the algebraic Thomas decomposition algorithm \textsf{Decompose} reveals that a system and all its successors do not only have equal ideals but also equal sets of equations.
  In particular, a system and its heir only differ in their set of inequations.
  We can slightly adapt the algebraic Thomas decomposition algorithm \textsf{Decompose} such that the simple systems (and the candidate simple systems $S_T$) allow more than one inequation with the same leader, as long as the conditional gcd of these inequations with the same leader have no common zero with the system.
  This adaption changes nothing of the previous discussion.
  However, now the inequations of a simple system are a subset of the inequations of its heir, and thus the union of any system in $L_{k'}''$ with all its successors is a simple algebraic $\sigma$-system.
  
  This results in a finite set of algebraic $\sigma$-systems having truncated solutions that are dense in the truncated solutions of $S$.
  The complement of this dense set is described by a countable set of systems.
  Continue with these systems inductively.
  The ideals of these complementary systems are strictly larger than the previous ideals.
  In particular, descending chains of these systems are finite in length.
  Hence, the number of algebraic $\sigma$-systems remains countable.\qed

\subsection{Proof of Theorem~\ref{theorem_counting_and_dimension}}\label{proof_theorem_counting_and_dimension}

The claim for the $\ell$-th differential counting polynomial is a corollary to Theorem~\ref{theorem_counting_well_defined}.
Thereby, we can assume without loss of generality that the set of solutions $\sol_\ce(I)_{\le \ell}$ up to order $\ell$ is constructible.
Now, the claim follows directly from the definition of the differential dimension function $\Omega_I:\ell\mapsto\dim(F\{U\}_{\le \ell}/I_{\le \ell})$ and that the dimension coincides  with the degree of the algebraic counting polynomial.
The formula for the coefficient also follows from the proof of Theorem~\ref{theorem_counting_well_defined}; we do not need to consider the degrees of the (quasilinear) derivatives of the equations.
  
The claim for the differential counting polynomial follows, as the dimension polynomial $\omega_I$ ultimately coincides with $\Omega_I$.

\subsection{Proof of Theorem~\ref{theorem_without_inequations}}\label{proof_theorem_without_inequations}

We prove this theorem by creating suitable simple algebraic systems $S_{\le\ell}\subset\C[G]_{\le\ell}$, which describe the formal power series solutions of $S$ around a point $\ce$ truncated at order $\ell$.

For this, define $\overline{S}$ by applying $\forget$ (cf.\ page~\pageref{forget}) to the equations in $S$ and all their reductive prolongations (cf.\ \cite[\textsection3]{thomasalg_jsc}).
Then, define $S_{\le\ell}:=\overline{S}\cap\C[G]_{\le\ell}$.

It is straightforward that $S_{\le\ell}$ is a simple algebraic system in $\C[G]_{\le\ell}$, e.g.\ derivatives are squarefree, as they are quasilinear and the initial of a derivative is the separant of the original equation.

Next, we show that the formal power series solutions of $S$ around a point $\ce$ truncated at order $\ell$ are the same as those of $S$, i.e., $\sol_\ce(S)_{\le\ell}=\sol_\ce(S_{\le\ell})_{\le\ell}$.
This would be clear if $\overline{S}$ contained all derivatives of equations in $S$ and not only the reductive prolongations.
However, the non-reductive prolongations are redundant, as $S$ is involutive (cf.\ \cite[3.5]{thomasalg_jsc}).

Finally, the existence proof of Theorem~\ref{theorem_counting_well_defined} on page \pageref{proof_theorem_counting_well_defined_existence} allows to read off the counting polynomial from $S_{\le\ell}$:
The number of free variables is equal to the value of the differential dimension function at $\ell$  (and to the value of the dimension polynomial for $\ell$ large enough).
Furthermore, one needs to multiply the degrees of the equations to get the coefficient; these degrees are one for all derivatives, and thus one is left with the degrees of the equations in $S$.

The uniqueness follows from Theorem~\ref{theorem_differential_counting_subsets}, as no $\aleph_0$ appears.\qed

\section*{Acknowledgments}

The author was partly supported by the DFG Schwerpunkt SPP 1489 and Graduiertenkolleg Experimentelle und konstruktive Algebra of the DFG.
His gratitude goes to the anonymous referees for a thorough review with many valuable comments.

\bibliographystyle{plain}
\bibliography{DiffCountPoly.bbl}
  
\end{document}